\numberwithin{equation}{section}
\theoremstyle{plain}
\newtheorem{prop}{Proposition}
\newtheorem{theo}[prop]{Theorem}
\newtheorem{coro}[prop]{Corollary}
\theoremstyle{definition}
\newtheorem{rema}[prop]{Remark}
\def\ra{\rightarrow}
\def\cH{{\mathcal H}}
\def\cO{{\mathcal O}}
\def\cX{{\mathcal X}}
\def\bP{{\mathbb P}}
\def\bZ{{\mathbb Z}}
\def\bR{{\mathbb R}}
\def\bC{{\mathbb C}}
\def\bF{{\mathbb F}}
\def\Br{\mathrm{Br}}
\def\CH{\mathrm{CH}}
\def\Gr{\mathrm{Gr}}
\def\lim{\mathrm{lim}}
\author{Brendan Hassett}
\address{Department of Mathematics\\
Brown University \\
Box 1917 
151 Thayer Street
Providence, RI 02912 \\
USA}
\email{bhassett@math.brown.edu}
\author{Alena Pirutka}
\address{Courant Institute\\
                New York University \\
                New York, NY 10012 \\
                USA }
\email{pirutka@cims.nyu.edu}
\author{Yuri Tschinkel}
\address{Courant Institute\\
                New York University \\
                New York, NY 10012 \\
                USA }
\email{tschinkel@cims.nyu.edu}
\address{Simons Foundation\\
160 Fifth Avenue\\
New York, NY 10010\\
USA}
\title[Intersections of three quadrics]{Intersections of three quadrics in $\bP^7$}
\begin{document}
\date{\today}

\begin{abstract}
We study rationality properties of smooth complete intersections of three quadrics in $\bP^7$. 
We exhibit a smooth family of such intersections
with both rational and non-rational fibers.
\end{abstract}

\maketitle

\section{Introduction}

The specialization method, introduced by Voisin \cite{Voisin}, 
and developed by Colliot-Th\'el\`ene--Pirutka, Totaro, and others, has led to major advances in higher-dimensional complex birational 
geometry. It makes it possible, for the first time, to prove failure of stable rationality of some smooth quartic threefolds \cite{ct-pirutka}, 
cyclic covers \cite{Voisin}, \cite{beau-6}, \cite{ct-pir-cyclic}, \cite{okada}, and
large degree smooth Fano hypersurfaces in projective space \cite{totaro-JAMS}.

The specialization method yields failure of stable rationality of a very general member of a family of complex
algebraic varieties from the existence of a single, mildly singular, fiber with an explicit obstruction, 
that can be formulated in terms of integral decomposition of the diagonal or universal $\CH_0$-triviality 
(see Section~\ref{sect:stab-rat} for more details and references). 
A surprising aspect of applications of the method was that {\em a priori} different families 
of varieties admit specializations to the same `reference varieties'.
This allows us to propagate the failure of stable rationality, by finding 
suitable chains of specializations. Examples of such `reference varieties' are conic or quadric surface bundles over rational
surfaces, with carefully chosen discriminant loci (see \cite{pirutka-survol}).     
A similar approach -- via specialization to quartic del Pezzo fibrations
over $\bP^1$ --
may be used to essentially settle the stable rationality problem 
for very general smooth rationally connected threefolds
\cite{HKT-conic}, \cite{HT-fano}, \cite{krylov}, with the exception of cubic threefolds, 
whose stable rationality remains elusive \cite{VoisinJEMS}. 

New effects arise in dimension four: 
rationality properties can change in smooth families \cite{HPT}. The relevant reference variety is
$Y\subset \bP^2_{\lambda}\times \bP^3_y$, given by the vanishing of the $(2,2)$ form   
\begin{equation}
\label{eqn:Y}
\lambda_1\lambda_2y_0^2+ \lambda_0\lambda_2y_1^2+ \lambda_0\lambda_1y_2^2+F(\lambda_0,\lambda_1,\lambda_2) y_3^2,
\end{equation}
with 
\begin{equation}
\label{eqn:F}
F(\lambda_0,\lambda_1,\lambda_2):=\lambda_0^2+\lambda_1^2+\lambda_2^2-2(\lambda_0\lambda_1+\lambda_0\lambda_2+\lambda_1\lambda_2)
\end{equation}
defining a conic tangent to each coordinate line. The family is the universal
$(2,2)$ hypersurface, a Fano fourfold of Picard rank two.  

The variety $Y$ gives rise to other interesting families of 
fourfolds failing stable rationality: double covers \cite{HPT-double}, and conic bundles over $\bP^3$ \cite{auel-co}.      
In this note, we exhibit another natural family
of smooth complex projective fourfolds $X$
 with rational and irrational fibers: Fano fourfolds   
of Picard rank one, obtained as intersections of three quadrics in $\bP^7$.

\begin{theo} 
\label{theo:main}
Let $B\subset \Gr(3,\Gamma(\cO_{\bP^7}(2)))$ be the open subset of the Hilbert
scheme parametrizing smooth complete intersections 
of three quadrics in $\bP^7$ and  
\begin{equation}
\label{eqn:fam}
\phi: \mathcal X \ra B
\end{equation}
the corresponding universal family. 
\begin{enumerate}
\item For very general $b\in B$ the fiber $\mathcal X_b$ is not stably rational.
\item The set of $b\in B$ such that $\mathcal X_b$ is rational is dense in $B$ for the Euclidean topology.  
\end{enumerate}
\end{theo}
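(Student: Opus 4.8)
The plan is to prove the two parts by rather different methods, matching the two-pronged structure already visible in the introduction (specialization for irrationality, explicit constructions for rationality).

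For part (1), the strategy is to apply the specialization method of Voisin and Colliot-Thélène--Pirutka. I would first establish that the universal intersection $\mathcal X_b$ is birational, or admits a degeneration related, to a family of quadric surface bundles. Concretely, a smooth intersection of three quadrics $X\subset\bP^7$ carries a net of quadrics $\bP^2_\lambda$ of rank-varying quadric fourfolds containing $X$; projecting from an appropriate linear subspace, or using the net structure, one expects to realize $X$ (after suitable birational modification) as fibered in quadric surfaces over a rational base. The key step is then to degenerate the family so that the special fiber $X_0$ is a mildly singular model that maps to, or is birational to, the reference variety $Y$ of equation \eqref{eqn:Y}, whose failure of universal $\CH_0$-triviality is known from \cite{HPT}. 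I would check that $X_0$ is universally $\CH_0$-trivially resolvable (i.e.\ admits a resolution with universally $\CH_0$-trivial fibers) and that it carries a nontrivial unramified Brauer class, inherited from $Y$, obstructing integral decomposition of the diagonal. The specialization theorem then propagates non-stable-rationality to the very general fiber. The main obstacle here is the birational matching: one must produce a precise birational correspondence, or a chain of degenerations, linking intersections of three quadrics in $\bP^7$ to the reference quadric surface bundle $Y$, and verify the $\CH_0$-triviality of all exceptional loci in the degeneration — this bookkeeping with singular models is where the real work lies.

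For part (2), the plan is to exhibit an explicit, dense-for-the-Euclidean-topology family of rational members. The natural source of rationality for an intersection of three quadrics is the presence of a low-degree linear subspace on $X$: if $X$ contains a plane $\bP^2$, or more usefully a line or a two-dimensional linear space along which projection yields a birational map to a simpler variety, then $X$ is rational. I would identify a subvariety $B'\subset B$ parametrizing those $X$ containing, say, a plane (or a sufficiently positive-dimensional family of lines) and show that projection from that linear subspace, or from the associated incidence correspondence, realizes such $X_b$ as rational — e.g.\ birational to a projective bundle or to $\bP^4$. The density statement follows because the locus of intersections containing a fixed-type linear subspace is a nonempty algebraic subset whose union over all translates/deformations is Zariski dense, hence Euclidean dense, in $B$; alternatively one invokes that rationality is preserved along the smooth locus of such an explicitly parametrized subfamily and that this subfamily meets every Euclidean neighborhood.

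I expect the harder of the two to be part (1): part (2) reduces to a classical projection argument once the correct linear subspace is pinned down, whereas part (1) requires both the construction of a good singular degeneration and the verification that its resolution is universally $\CH_0$-trivial while retaining a nontrivial Brauer obstruction pulled back from $Y$. In writing this up I would state a lemma asserting the birational equivalence (over an open base) between the quadric-bundle presentation of a generic $\mathcal X_b$ and a model fibered over $\bP^2$ or $\bP^1$, and a second lemma computing the unramified Brauer group of the degenerate fiber, before invoking the specialization criterion recorded in Section~\ref{sect:stab-rat}.
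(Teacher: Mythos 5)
Your part (1) follows the paper's route in outline: Beauville's construction (Proposition~\ref{prop:bundle2}) realizes any smooth $X$ containing a line as birational to a quadric surface bundle over $\bP^2$ with octic discriminant, an explicit singular member is exhibited whose bundle model is birational to the reference variety \eqref{eqn:Y}, condition (R) is checked by explicit blowups of the indeterminacy locus, and Theorem~\ref{thm:main-help} concludes. You correctly identify that the substance is the birational matching and the $\CH_0$-triviality bookkeeping, which the paper carries out chart by chart; as a plan this part is sound.

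Part (2), however, contains a genuine gap: no argument of the kind you propose can yield density \emph{for the Euclidean topology}. The locus $B'\subset B$ of intersections containing a plane (or carrying any other fixed algebraic structure forcing rationality) is a Zariski-closed subset of $B$, and by your own part (1) it is a \emph{proper} one, since the very general member is not even stably rational. A proper closed subvariety is nowhere dense in the Euclidean topology, and ``the union over all translates/deformations'' adds nothing because $B$ already parametrizes all intersections of three quadrics, so that union is $B'$ itself. (Containing a plane does force rationality --- projection from the plane is birational onto $\bP^4$ --- but this locus has positive codimension, so it cannot be Euclidean dense.) The paper's mechanism is essentially transcendental: rationality of $\mathcal X_b$ follows from a rational section of its quadric-bundle model $\pi:Q\to\bP^2$, and by Springer's theorem an odd-degree rational multisection suffices; by Colliot-Th\'el\`ene--Voisin (Proposition~\ref{prop:bundle}) the integral Hodge conjecture holds for $(2,2)$-classes on $Q$, so such a multisection exists as soon as there is an integral $(2,2)$ Hodge class pairing oddly with the fiber class. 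Euclidean density of the locus where such a class exists is then obtained from Voisin's infinitesimal criterion: using Terasoma's identification of the period map with multiplication $R_{(1,2)}\otimes R_{(2,2)}\to R_{(3,4)}$ in the Jacobian ring, the paper verifies at one explicit smooth member (Proposition~\ref{prop:surj}, via a Macaulay2 computation) that $\bar\nabla(\gamma)$ surjects onto $H^{1,3}$, so the relevant Noether--Lefschetz-type loci are dense in $B$; one explicit rational fiber (Proposition~\ref{prop:section}, where the section is a moving plane through the fixed line, not a plane inside $X$) anchors the argument. Without this variational Hodge theory step, or an equivalent, your proof of part (2) does not go through.
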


\

\noindent {\bf Acknowledgments:} The first author was partially supported by NSF grant 1551514, the second 
author by NSF grant 1601680, and the third by NSF grant 1601912.
We would like to thank Fran\c{c}ois Charles for helpful conversations.

\section{Strategy}
\label{sect:strategy}

We follow the approach in \cite{HPT}. In this section, we
recall the main steps in the proof; details are provided in Section ~\ref{sect:comp}.

\subsection{Fibers that are not stably rational.\\} 
\label{sect:stab-rat}

Recall that a projective variety $X$ over a field $k$ is {\em universally 
$\CH_0$-trivial} if for all field extensions $k'/k$ 
the natural degree homomorphism from the Chow group of zero-cycles
$$
\CH_0(X_{k'})\ra \bZ
$$
is an isomorphism. A projective morphism 
$$
\beta: \tilde{X}\ra X
$$ 
of $k$-varieties 
is {\em universally $\CH_0$-trivial} if for all extensions
$k'/k$ the push-forward homomorphism
$$
\beta_* : \CH_0(\tilde{X}_{k'})\ra \CH_0(X_{k'})
$$
is an isomorphism.

In this paper, we apply the
specialization method of Voisin in the following form.
\begin{theo}
\label{thm:main-help}
\cite[Theorem 2.1]{Voisin}, \cite[Theorem 2.3]{ct-pirutka}
Let 
$$
\phi: \mathcal X\ra B
$$ 
be a flat projective morphism  of complex varieties with smooth generic fiber. 
Assume that there exists a point $b\in B$ such that the fiber 
$$
X:=\phi^{-1}(b)
$$ 
satisfies the following conditions:
\begin{itemize}
\item[(R)] $X$ admits a desingularization 
$$
\beta: \tilde{X}\ra X
$$
such that the morphism $\beta$ is universally $\CH_0$-trivial;
\item[(O)] the variety $\tilde X$ is not universally $\CH_0$-trivial.  
\end{itemize}
Then a very general fiber of $\phi$ is not universally $\CH_0$-trivial; 
in particular, it is not stably rational.
\end{theo}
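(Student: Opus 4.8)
The plan is to convert the \emph{pointwise} obstruction encoded by conditions (R) and (O) into a statement about the whole family, using the characterization of universal $\CH_0$-triviality through integral decompositions of the diagonal, together with the specialization homomorphism on Chow groups. I would argue by contraposition: assume that a very general fiber \emph{is} universally $\CH_0$-trivial and derive that $\tilde X$ is universally $\CH_0$-trivial, contradicting (O).

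The first ingredient I would set up is the following reformulation (Bloch--Srinivas in spirit, in the present integral form due to Colliot-Th\'el\`ene--Pirutka): a smooth projective variety $W$ over a field $k$ is universally $\CH_0$-trivial if and only if it admits an integral decomposition of the diagonal, i.e. an identity
\[
\Delta_W = W\times w_0 + Z \in \CH_{\dim W}(W\times W),
\]
where $w_0$ is a zero-cycle of degree $1$ and $Z$ is supported on $D\times W$ for some proper closed subset $D\subsetneq W$. The implication $\Rightarrow$ is obtained by applying universal $\CH_0$-triviality over the function field $k(W)$ to the class of the generic point, which is then rationally equivalent to a constant zero-cycle of degree $1$; spreading out this relation over $W$ produces the cycle $Z$. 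The converse follows by letting the correspondence on the right act on $\CH_0(W_{k'})$ for an arbitrary extension $k'/k$, noting that a zero-cycle avoiding $D$ is sent to its degree times $w_0$.

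Next I would carry out the reduction to a discrete valuation ring. The locus
\[
B_{\mathrm{triv}}=\{\,b'\in B : \cX_{b'}\ \text{is smooth and universally }\CH_0\text{-trivial}\,\}
\]
is, by the reformulation above, a countable union of closed subsets of the smooth locus of $\phi$: a decomposition of the diagonal is witnessed by finitely many cycles $Z,w_0$ (living in countably many Chow/Hilbert families) together with auxiliary cycles realizing the rational equivalence, and each such witness is a closed condition. If a very general fiber is universally $\CH_0$-trivial, then $B_{\mathrm{triv}}$ contains the complement of a countable union of proper closed subsets, so one of its closed pieces must dominate $B$. Cutting $B$ by a general curve $C$ through $b$ and localizing at $b$ then yields a discrete valuation ring $A=\cO_{C,b}$ with residue field $\bC$ and fraction field $K=\bC(C)$, for which the smooth generic fiber $\cX_K$ becomes universally $\CH_0$-trivial after base change to $\bar K$; descending to a finite extension $K'/K$ (replacing $A$ by its normalization localized at a point over $b$, still a discrete valuation ring with residue field $\bC$), I obtain a decomposition of the diagonal of $\cX_{K'}$ defined over $K'$.

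Finally I would specialize this decomposition to the closed fiber. Choosing a resolution $\pi:\tilde{\cX}\to\cX$ of the total space that is an isomorphism over the generic point and restricts over $b$ to the given resolution $\beta:\tilde X\to X$, the morphism $\tilde{\cX}\to\Spec A'$ is proper with smooth generic fiber $\cX_{K'}$ and smooth special fiber $\tilde X$. Fulton's specialization homomorphism $\CH_{\bullet}(\cX_{K'}\times_{K'}\cX_{K'})\to\CH_{\bullet}(\tilde X\times_{\bC}\tilde X)$ is compatible with flat pullback, proper pushforward and intersection with the diagonal, so the decomposition of $\Delta_{\cX_{K'}}$ specializes to an identity of the form $\Delta_{\tilde X}=\tilde X\times \tilde x_0 + (\text{cycle supported over a proper closed subset})$ \emph{provided} the exceptional corrections introduced by $\pi$ over the special fiber can be absorbed into the error term. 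This is exactly where hypothesis (R) is used: the universal $\CH_0$-triviality of $\beta$ guarantees that every zero-cycle contracted by $\beta$ is, over every field extension, rationally equivalent to a cycle of the same degree on $X$, which lets one rewrite the exceptional contributions as supported over a proper closed subset of $\tilde X$. The resulting identity is an integral decomposition of the diagonal of $\tilde X$, so by the reformulation $\tilde X$ is universally $\CH_0$-trivial---contradicting (O). Hence the very general fiber is not universally $\CH_0$-trivial; since a stably rational (indeed retract rational) smooth projective variety is always universally $\CH_0$-trivial, it is not stably rational. The main obstacle is precisely this last step: controlling the specialization of the diagonal across the resolution of the \emph{singular} special fiber, and condition (R) is the hypothesis engineered to overcome it.
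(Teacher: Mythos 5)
The paper does not prove this theorem; it quotes it from Voisin \cite[Theorem 2.1]{Voisin} and Colliot-Th\'el\`ene--Pirutka \cite[Theorem 2.3]{ct-pirutka}, so your attempt has to be measured against the proofs there. Your first two stages are sound and match the standard argument: the equivalence, for a \emph{smooth} projective variety, between universal $\CH_0$-triviality and an integral decomposition of the diagonal; the countability argument showing that the locus of fibers admitting such a decomposition is a countable union of closed subsets, one of which must dominate $B$ if the very general fiber is universally $\CH_0$-trivial; and the reduction to a discrete valuation ring $A$ with residue field $\bC$, special fiber $X$, and a decomposition of the diagonal of the generic fiber after a finite extension of the fraction field.

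The genuine gap is in your final step, and you correctly sense that it is the crux. You posit a resolution $\pi:\tilde{\cX}\to\cX$ of the total space that is an isomorphism over the generic point and whose special fiber over $\Spec A$ is the prescribed smooth variety $\tilde X$, i.e., which ``restricts over $b$ to $\beta$''. No such model exists in general: the special fiber of any resolution of the total space is a divisor containing the strict transform of $X$ together with exceptional components over the singular locus, and there is no reason for it to be smooth, irreducible, or equal to a resolution of $X$ chosen in advance---a simultaneous resolution inducing a given resolution of the singular fiber is a strong hypothesis, not a construction. Consequently Fulton's specialization homomorphism lands in the Chow group of this larger special fiber, and your proposal to ``absorb the exceptional corrections into the error term'' misuses (R): condition (R) says that $\beta_*:\CH_0(\tilde X_{k'})\to \CH_0(X_{k'})$ is an isomorphism for every extension $k'/\bC$; it does not provide a cycle-level rewriting of exceptional contributions in $\CH_{\dim X}(\tilde X\times\tilde X)$. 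The cited proof avoids any simultaneous resolution by specializing \emph{zero-cycles} rather than the diagonal: for each extension $L/\bC$ one chooses a discrete valuation ring with residue field $L$ dominating $A$, and Fulton's specialization $\CH_0(\cX_{K_L})\to\CH_0(X_L)$---combined with the facts that $\CH_0$ of an integral projective variety is generated by closed points lying in any dense open subset, and that points of the smooth locus lift over a henselian base---shows that the degree map $\CH_0(X_L)\to\bZ$ is an isomorphism, i.e., the \emph{singular} fiber $X$ itself is universally $\CH_0$-trivial. Only then does (R) enter, in exactly the form it is stated: $\beta_*$ is universally an isomorphism, so $\tilde X$ is universally $\CH_0$-trivial, contradicting (O). Repairing your argument amounts to replacing your last paragraph by this $\CH_0$-level specialization.
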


Condition (O) holds, for instance, if the unramified cohomology
group  $H^2_{nr}(\mathbb C(X)/\bC, \bZ/2)$ is nontrivial.
By \cite[Proposition 1.8]{ct-pirutka} and \cite[Lemma 2.4]{ct-pir-cyclic} condition (R) is satisfied if 
for every scheme point $x$ of $X$, 
the fiber $\beta^{-1}(x)$, considered as a variety over the residue field $\kappa(x)$,  
could be written as $\beta^{-1}(x)=\cup_i X_i$, where
 each component $X_i$ is smooth, geometrically irreducible and $\kappa(x)$-rational and each intersection $X_i\cap X_j$ is either empty or has a zero-cycle of degree 1.

In \cite[Propositions 11, 12]{HPT}, we constructed a hypersurface 
$Y\subset \bP^2\times \bP^3$ of bidegree $(2,2)$, satisfying the
obstruction condition (O) and the resolution condition (R) as above (see \eqref{eqn:Y}). 
The first projection $Y\to \bP^2$ endows $Y$ with a structure of a quadric surface 
bundle with  discriminant curve of degree $8$.
As explained in \cite[Exemple 1.4.4]{Beau77}, smooth intersections of three quadrics in $\bP^7$ 
are also birational to quadric surface bundles over $\bP^2$, 
with discriminant curve of degree $8$ (see Proposition \ref{prop:bundle2} below).
These two families, hypersurfaces of bidegree $(2,2)$ in $\bP^2\times \bP^3$ 
and intersections of three quadrics in $\bP^7$, are genuinely different;
see Section~\ref{sect:differ} for a precise statement.
Both specialize (birationally) to the same reference fourfold: 
in Proposition~\ref{prop:special} we provide an explicit example of a (singular) intersection of three quadrics $X\subset \bP^7$  
such that $X$ is birational to the variety $Y$ above. We deduce  Theorem~\ref{theo:main}, Part (1), from Theorem~\ref{thm:main-help} 
at the end of Section~\ref{secnsr}.

\subsection{  \bf One rational fiber.\\}

Let $\phi: \mathcal X \ra B$ be the family \eqref{eqn:fam}. 
By Proposition \ref{prop:bundle2}, for any $b\in B$, the fiber $\mathcal X_b$ is birational to a 
quadric bundle over $\bP^2$. 
In Section \ref{oneratf} (Proposition \ref{prop:section}), 
we provide an explicit example of a fiber $\mathcal X_b$, birational to a quadric bundle with a rational section. 
In particular, the fourfold $\mathcal X_b$ is rational.

\subsection{  \bf Density of rational fibers.\\}
Let $X\subset \bP^7$ be a smooth intersection of three quadrics. 
As in the previous step, in order to establish that $X$ is rational, 
it suffices to exhibit a  quadric surface bundle $\pi:Q\ra \bP^2$ such that $Q$ is birational to $X$ 
and such that $\pi$ admits a rational section. By Springer's theorem, it suffices to show 
that $\pi$ has a rational multisection of odd degree. 
For quadric bundles this can 
be formulated as a Hodge-theoretic condition:
\begin{prop}
\label{prop:bundle}
\cite[Corollaire 8.2]{CTV}
Let $Q$ be a smooth projective complex algebraic variety, admitting a dominant morphism $\pi: Q\to\mathbb P^2$, 
with generic fiber a quadric of dimension at least $1$. 
Then the integral Hodge conjecture holds for classes of degree $(2,2)$ on $Q$.
\end{prop}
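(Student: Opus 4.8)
The plan is to convert the integral Hodge conjecture in degree $(2,2)$ into a vanishing statement in unramified cohomology, and then to verify that vanishing using the quadric fibration. Concretely, for any smooth projective complex variety the Bloch--Ogus coniveau spectral sequence together with the Colliot-Th\'el\`ene--Voisin comparison theorem \cite{CTV} identifies the defect of the integral Hodge conjecture in degree $4$,
\[
Z^4(Q):=\mathrm{Hdg}^4(Q,\bZ)/\langle\text{classes of codimension-two subvarieties}\rangle,
\]
with the unramified cohomology group $H^3_{nr}(\bC(Q)/\bC,\bQ/\bZ(2))$. Since this group is a stable birational invariant, it suffices to prove that $H^3_{nr}(\bC(Q)/\bC,\bQ/\bZ(2))=0$.

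To compute it I would exploit the fibration $\pi$. Set $K=\bC(\bP^2)$, so that $\bC(Q)=K(\mathcal Q)$ is the function field of a smooth quadric $\mathcal Q$ of dimension at least $1$ over $K$. A class unramified on all of $Q$ restricts, on the one hand, to a class unramified on the generic fiber, i.e. an element of $H^3_{nr}(K(\mathcal Q)/K,\bQ/\bZ(2))$, and must, on the other hand, have vanishing residues along the vertical divisors lying over the curve in $\bP^2$ where the fibers degenerate. The key numerical input is that $\mathrm{cd}(K)=2$, whence $H^3(K,\bQ/\bZ(2))=0$; thus no contribution survives from classes pulled back from the base, and the whole group is governed by the genuinely fibral classes together with their residues over the discriminant.

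Decomposing $\bQ/\bZ(2)=\bigoplus_\ell \bQ_\ell/\bZ_\ell(2)$, the $\ell$-primary part for $\ell$ odd is disposed of by a transfer argument: a smooth quadric of positive dimension carries a zero-cycle of degree $2$, cut out by a general line, so after inverting $2$ the quadric behaves like a projective space over $K$ and the pullback $H^3_{nr}(\bP^2)\to H^3_{nr}(Q)$ is an isomorphism on $\ell$-primary parts; since $\bP^2$ is rational one has $H^3_{nr}(\bP^2)=0$, and the odd part of $H^3_{nr}(Q)$ vanishes. The $2$-primary part is the substantive case. Here I would invoke the computation of the unramified cohomology of quadrics due to Arason and to Kahn--Rost--Sujatha, which expresses $H^3_{nr}(K(\mathcal Q)/K,\bZ/2)$ in terms of the quadratic form defining $\mathcal Q$; combined with $\mathrm{cd}_2(K)=2$ (which kills $H^3(K,\bZ/2)$ and the relevant higher invariants), this shows the generic-fiber contribution is trivial.

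The main obstacle is the $2$-primary residue analysis along the degeneration divisor. One must show that a globally unramified class, once known to restrict trivially to the generic fiber, also has trivial residues along the vertical divisors over the discriminant curve $C\subset\bP^2$. This requires tracking how the defining quadratic form degenerates along $C$ and bounding the residues, which land in the cohomology of the function fields of the components of the singular fibers; the low cohomological dimension of these residue fields, together with properness of $\pi$ and the vanishing $H^3(K,\bQ/\bZ(2))=0$, should force the residues to cancel. Once every residue is shown to vanish, we conclude $H^3_{nr}(\bC(Q)/\bC,\bQ/\bZ(2))=0$, and hence the integral Hodge conjecture for classes of degree $(2,2)$ on $Q$.
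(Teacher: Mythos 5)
The paper offers no proof of this proposition at all: it is quoted verbatim from Colliot-Th\'el\`ene--Voisin \cite[Corollaire 8.2]{CTV}, so the benchmark is their argument, and your sketch does largely reconstruct it (comparison of the degree-$4$ defect with $H^3_{nr}$, transfer through a degree-$2$ point for odd torsion, Arason and Kahn--Rost--Sujatha plus $\mathrm{cd}(\bC(\bP^2))=2$ for the $2$-primary part). But two points need repair. The lesser one: your opening claim that for \emph{any} smooth projective complex variety the defect $Z^4(Q)$ is identified with $H^3_{nr}(\bC(Q)/\bC,\bQ/\bZ(2))$ is false as stated --- in general the comparison theorem of \cite{CTV} only relates $H^3_{nr}$ to the \emph{torsion} of $Z^4$, and $Z^4$ can be nontorsion (Koll\'ar's high-degree hypersurfaces in $\bP^4$). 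What you need, and what Colliot-Th\'el\`ene--Voisin use, is that $Q$ is rationally connected --- here automatic, since the fibers are quadrics over the rational base $\bP^2$ (Graber--Harris--Starr) --- which forces $Z^4(Q)$ to be finite and gives the clean identification $Z^4(Q)\simeq H^3_{nr}(\bC(Q)/\bC,\bQ/\bZ(2))$. This hypothesis must be stated and verified, though doing so is routine.

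The genuine flaw is your self-declared ``main obstacle'': the $2$-primary residue analysis along the vertical divisors over the discriminant curve is not a step of the proof at all, and your inability to complete it (``should force the residues to cancel'') reflects a misreading of what has to be shown. The group $H^3_{nr}(\bC(Q)/\bC,\bQ/\bZ(2))$ is a \emph{subgroup} of $H^3(\bC(Q),\bQ/\bZ(2))$, and $\bC(Q)=K(\mathcal{Q})$ where $K=\bC(\bP^2)$ and $\mathcal{Q}$ is the generic fiber. A class unramified at every divisorial valuation trivial on $\bC$ is in particular unramified at those trivial on $K$, so one has an inclusion $H^3_{nr}(\bC(Q)/\bC,\bQ/\bZ(2))\subseteq H^3_{nr}(K(\mathcal{Q})/K,\bQ/\bZ(2))$. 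Once the generic-fiber computation (Arason/Kahn--Rost--Sujatha together with $H^3(K,\bQ/\bZ(2))=0$, and the transfer argument at odd primes) shows the latter group vanishes, your class is zero as an element of $H^3(\bC(Q),\bQ/\bZ(2))$, and the proof is over. There are no residues along vertical divisors left to bound: the vanishing of \emph{all} residues was the hypothesis defining the class, not a conclusion to be extracted from the geometry of the degenerate fibers. With that phantom step deleted and the rational-connectedness hypothesis inserted, your outline coincides with the proof in \cite{CTV}; as written, it leaves unproved a ``main obstacle'' that is in fact vacuous.
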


Thus, in order to show that $X$ is rational, it suffices to provide a $(2,2)$-Hodge class intersecting the class of a 
fiber of $\pi$ in odd degree. 
We achieve this by studying the infinitesimal period map. This technique is explained in \cite[5.3.4]{voisin-book}.

The Hodge diamond of $X$ is of the following form:
{\tiny \[
\begin{array}{ccccccccc}
&&&&1&&&& \\[2mm]
&&&0&&0&&& \\[2mm]
\ \ \ \ &&0&&1&&0&& \ \ \ \ \\[2mm]
&0&&0&&0&&0& \\[2mm]
0& &3&&38 &&3&&0\\[2mm]
&0&&0&&0&&0& \\[2mm]
&&0&&1&&0&& \\[2mm]
&&&0&&0&& &\\[2mm]
&&&&1&&&& \\[2mm]
\end{array}
\]}
In particular, the degree $4$ cohomology is essentially of weight $2$. 
We can then apply the following criterion to the family $\mathcal X\to B$ of Theorem~\ref{theo:main}  (cf. \cite[5.3.4]{voisin-book}):
\begin{prop}
Suppose there exists a $b_0\in B$ and $\gamma \in  H^{2,2}(\mathcal X_{b_0})$ 
such that the infinitesimal period map
\begin{equation}\label{period}
\bar{\nabla}:T_{B,b_0} \ra Hom(H^{2,2}(\mathcal X_{b_0}), H^{1,3}(\mathcal X_{b_0})),
\end{equation}
evaluated at $\gamma$, gives a surjective map
\begin{equation}\label{periodev}
\bar{\nabla}(\gamma):T_{B,b_0} \ra H^{1,3}(\mathcal X_{b_0}).
\end{equation}
 Then for any $b\in B$ and any Euclidean neighborhood
$b\in B' \subset B$, the image of the natural map (composition of
inclusion with local trivialization):
\begin{equation}\label{trivialization}
\cH^{2,2}_{\bR} \ra H^4(\mathcal X_b,\bR)
\end{equation}
contains an open subset $V_b \subset H^4(\mathcal X_b,\bR)$.
Here $\cH^{2,2}_{\bR}$ is a vector bundle over $B'$ with fiber over $u$
equal to the real classes of type $(2,2)$ in $H^4(\cX_u)$.
\end{prop}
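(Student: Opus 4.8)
The plan is to show that the map \eqref{trivialization}, regarded as a map $\Phi$ from the total space of the bundle $\cH^{2,2}_{\bR}$ over $B'$ to the fixed vector space $H^4(\mathcal X_b,\bR)$, is a submersion at a well-chosen point; its image then automatically contains an open set. After shrinking $B'$ so that it is simply connected, the flat Gauss--Manin connection $\nabla$ trivializes the local system $R^4\phi_*\bR$ over $B'$, identifying every fibre $H^4(\mathcal X_u,\bR)$ with the fixed real vector space $\mathbb H_{\bR}:=H^4(\mathcal X_b,\bR)$; in this trivialization the Hodge filtration $F^{\bullet}(u)$ moves holomorphically inside $\mathbb H:=\mathbb H_{\bR}\otimes_{\bR}\bC$. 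Since $h^{4,0}=h^{0,4}=0$, a real class lies in $F^2(u)$ if and only if it is of type $(2,2)$, so the fibre of $\cH^{2,2}_{\bR}$ over $u$ is $F^2(u)\cap\mathbb H_{\bR}$, and in the trivialization the map \eqref{trivialization} becomes simply $\Phi(u,\alpha)=\alpha\in\mathbb H_{\bR}$.

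The heart of the argument is the computation of the differential $d\Phi$ at a point $(b',\gamma')$, where $\gamma'\in F^2(b')\cap\mathbb H_{\bR}$ is a real class of type $(2,2)$ at $b'$. Let $\pi\colon\mathbb H\to\mathbb H/F^2(b')\cong H^{1,3}(b')$ be the projection. The vertical directions ($\dot u=0$) fill out the fibre $\cH^{2,2}_{\bR,b'}=\ker(\pi|_{\mathbb H_{\bR}})$. For a horizontal direction, realize a tangent vector by a curve $t\mapsto(u(t),\alpha(t))$ with $\alpha(t)\in F^2(u(t))\cap\mathbb H_{\bR}$ and $\alpha(0)=\gamma'$; then $d\Phi(\dot u,\dot\alpha)=\dot\alpha=\nabla_{\dot u}\alpha$, and Griffiths transversality, in the refined form that computes the class in $F^1/F^2$, gives
\[
\pi(\dot\alpha)=\bar{\nabla}(\dot u)(\gamma'),
\]
since the class of $\gamma'$ in $F^2/F^3=H^{2,2}(b')$ is $\gamma'$ itself. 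As $\dot u$ ranges over the real tangent space of $B$ its $(1,0)$-part ranges over all of $T_{B,b'}$, so $\pi(\mathrm{image}\,d\Phi)\supseteq\mathrm{image}\,\bar{\nabla}(\gamma')$. Therefore, whenever $\bar{\nabla}(\gamma')$ is surjective, the image of $d\Phi$ contains $\ker(\pi|_{\mathbb H_\bR})$ and surjects onto $H^{1,3}(b')$ under $\pi$, hence equals $\mathbb H_{\bR}$; thus $\Phi$ is a submersion at $(b',\gamma')$ and its image contains an open neighborhood of $\gamma'$.

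It remains to globalize from the single given point $b_0$ to arbitrary $b$. Let $\Sigma\subset B$ be the set of points admitting \emph{some} $\gamma\in H^{2,2}(\mathcal X_{b})$ with $\bar{\nabla}(\gamma)$ surjective. Its complement is the locus where the maximal minors of the evaluation $\gamma\mapsto\bar{\nabla}(\cdot)(\gamma)$ vanish identically in $\gamma$, hence a proper closed analytic subset (proper since $b_0\in\Sigma$); as $B$ is connected, $\Sigma$ is dense. Given any $b$ and any Euclidean neighborhood $B'\ni b$, choose $b'\in B'\cap\Sigma$. At $b'$ the good classes form a nonempty Zariski-open $G_{b'}\subset H^{2,2}(b')$; since $\overline{H^{2,2}(b')}=H^{2,2}(b')$, the real classes of type $(2,2)$ form an $\bR$-form of $H^{2,2}(b')$, which is Zariski-dense, so $G_{b'}$ contains a real class $\gamma'$. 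Applying the submersion computation at $(b',\gamma')$ then produces the required open subset $V_b\subset H^4(\mathcal X_b,\bR)$.

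I expect this last globalization to be the main obstacle: the bundle $\cH^{2,2}_{\bR}$ is only real-analytic, the hypothesis is supplied at a single point $b_0$, yet the conclusion is demanded at \emph{every} $b$ and in \emph{arbitrarily small} neighborhoods. Overcoming it rests on the density of $\Sigma$ together with the observation that a proper complex-analytic subvariety of a fibre cannot contain the real form, which guarantees good real $(2,2)$-classes throughout a dense set. By contrast, the underlying differential computation is the standard infinitesimal period calculation of \cite[5.3.4]{voisin-book}, and the reduction to it via the vanishing $h^{4,0}=0$ is routine.
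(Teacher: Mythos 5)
Your proof is correct and is essentially the argument the paper relies on: the paper gives no proof of this proposition, deferring to \cite[5.3.4]{voisin-book}, and your computation that $\Phi$ is a submersion at a real $(2,2)$-class $\gamma'$ with $\bar{\nabla}(\gamma')$ surjective -- identifying $d\Phi$ on horizontal directions via Griffiths transversality, with the $(0,1)$-part of the derivative dying in $F^2$ -- is exactly that standard argument. Your globalization step is also the intended one (cf.\ \cite[Proposition 14]{HPT}): surjectivity of $\bar{\nabla}(\gamma)$ for some $\gamma$ fails only along a proper closed analytic subset of the connected base $B$, and at a good parameter $b'$ the good classes form a nonempty Zariski-open subset of $H^{2,2}(\mathcal X_{b'})$, which must meet the Zariski-dense real form, so good \emph{real} classes exist over a dense set of parameters.
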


In order to check the infinitesimal criterion we use an explicit description of the period map:
\begin{prop}
\cite[Corollary 2.5, Proposition 2.6]{Ter}
Let $X\subset \bP^7$ be a smooth complete intersection of three quadrics, defined by  equations 
$$
Q_i(x_0,\ldots, x_7)=0, \quad 
i=0,1,2$$ and let 
$$
F=\mu_0Q_0+\mu_1Q_1+\mu_2Q_2\in \bC[\mu_0, \mu_1,\mu_2, x_0,\ldots, x_7].
$$ 
Let $I\subset \bC[\mu_0, \mu_1,\mu_2, x_0,\ldots, x_7]$ 
be the ideal generated by 
$$
\partial F/\partial \mu_i,\quad i=0,1,2\mbox{ and }\partial F/\partial x_i,\quad  i=0,\ldots, 7.
$$ 
Put
$$
R=\bC[\mu_0, \mu_1,\mu_2, x_0,\ldots, x_7]/I
$$ 
and let $R_{(a,b)}$ be the space of homogeneous elements of degree $(a,b)$ in $R$, with respect to the  grading $(\mu,x)$. Then there is an isomorphism
$$H^{4-q,q}_{\mathrm{prim}}(X)\simeq R_{(q,2q-2)}$$
 and the period map (\ref{period})  is identified with the multiplication homomorphism
\begin{equation}\label{periodexplicit}
R_{(1,2)}\otimes R_{(2,2)}\to R_{(3,4)}. 
\end{equation}
\end{prop}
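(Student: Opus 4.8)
The plan is to deduce both assertions from Griffiths' residue calculus, applied not to $X$ directly but to the hypersurface produced from the net $F=\mu_0Q_0+\mu_1Q_1+\mu_2Q_2$ by the Cayley trick. Viewing $F$ as a section of $\cO(1,2)$ on $\bP^2_\mu\times\bP^7_x$, set $Z:=\{F=0\}$, a smooth hypersurface of dimension $8$. Its projection to $\bP^7$ is a $\bP^1$-bundle over the complement of $X$ that degenerates to a $\bP^2$ along $X$, so the Cayley trick supplies an isomorphism of Hodge structures
$$H^{4-q,q}_{\mathrm{prim}}(X)\ \cong\ H^{6-q,\,q+2}_{\mathrm{prim}}(Z),$$
the shift by $(2,2)$ being the Tate twist by $c-1=2$ attached to a complete intersection of codimension $c=3$. (The other projection $Z\to\bP^2_\mu$ is the quadric surface bundle of Proposition~\ref{prop:bundle2}, which is what makes this auxiliary variety natural.) This reduces everything to a computation on the single hypersurface $Z$.

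I would then invoke the multigraded Griffiths--Dolgachev--Batyrev--Cox residue theory for ample hypersurfaces in the toric variety $\bP^2\times\bP^7$. The Jacobian ideal of $F$ in the Cox ring $\bC[\mu_0,\mu_1,\mu_2,x_0,\ldots,x_7]$ is generated by the $\partial F/\partial\mu_i$ and the $\partial F/\partial x_j$, that is, by the ideal $I$ of the statement, so the residue map matches graded pieces of the primitive part of $H^8(Z)$ with graded pieces of $R$. Writing $\beta=(1,2)$ for the multidegree of $F$ and $\beta_0=-K_{\bP^2\times\bP^7}=(3,8)$, the residue isomorphism takes the form
$$H^{8-p,\,p}_{\mathrm{prim}}(Z)\ \cong\ R_{(p+1)\beta-\beta_0}.$$
Setting $p=q+2$ gives $(q+3)(1,2)-(3,8)=(q,2q-2)$, and combining with the Cayley isomorphism yields $H^{4-q,q}_{\mathrm{prim}}(X)\cong R_{(q,2q-2)}$. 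As consistency checks, $R_{(1,0)}=\langle\mu_0,\mu_1,\mu_2\rangle$ has dimension $3=h^{3,1}$ and $R_{(2,2)}$ has dimension $37=h^{2,2}_{\mathrm{prim}}$.

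For the period map, recall that the directions along which the Hodge structure is infinitesimally constant are precisely the reparametrizations of the family: changes of basis of the net $\langle Q_0,Q_1,Q_2\rangle$ and infinitesimal automorphisms of $\bP^7$. In the deformation $\dot F=\sum_i\mu_i\dot Q_i$, an element of bidegree $(1,2)$, these are the classes $\mu_kQ_i$ and $x_\ell\,\partial F/\partial x_j$ respectively, both lying in $I_{(1,2)}$; hence the effective first-order deformation space is $R_{(1,2)}$, the surjection $T_{B,b_0}\to R_{(1,2)}$ being the quotient by these trivial directions, through which $\bar\nabla$ factors. It then remains to identify the period differential with cup product against the Kodaira--Spencer class, which by the general infinitesimal-variation principle underlying Griffiths' theory (transported through the Cayley isomorphism, functorial for deformations of the net) is induced by the ring multiplication of $R$. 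Since $(1,2)+(2,2)=(3,4)$ and $R_{(2,2)}\cong H^{2,2}_{\mathrm{prim}}(X)$, $R_{(3,4)}\cong H^{1,3}(X)$—the hyperplane class in $H^{2,2}$ being annihilated by $\bar\nabla$—the map $\bar\nabla(\gamma)$ is exactly the multiplication
$$R_{(1,2)}\otimes R_{(2,2)}\ \longrightarrow\ R_{(3,4)},$$
as claimed.

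The main obstacle is the residue isomorphism of the second step with its precise target degrees. This requires the exactness of the relevant Koszul/Jacobian complexes on $\bP^2\times\bP^7$—using the smoothness of $Z$, equivalently of $X$, to ensure the partials behave like a regular sequence in the pertinent range—together with the comparison of the pole-order filtration with the Hodge filtration and the careful separation of genuinely primitive classes from those pulled back from the ambient product; this last point is exactly where the weight-two level $R_{(2,2)}\leftrightarrow H^{2,2}_{\mathrm{prim}}$ must have the hyperplane class subtracted. Terasoma's own argument runs this residue computation directly for complete intersections rather than through the Cayley hypersurface, but the two routes are equivalent; once the graded isomorphism and its multiplicative functoriality are in hand, the Cayley reduction and the identification of $\bar\nabla$ are formal.
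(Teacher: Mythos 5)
Your proposal is sound, but note that the paper contains no proof of this proposition at all: it is quoted directly from Terasoma \cite{Ter}, so the comparison is with that source rather than with an argument in the text. Your reconstruction via the Cayley trick plus multigraded residue theory is a genuinely different (and well-trodden) route --- it is essentially Konno's approach to the IVHS of complete intersections, resting on the Batyrev--Cox description of the Hodge structure of an ample hypersurface in a simplicial toric variety --- whereas Terasoma's derivation sets up the bigraded Jacobian ring and the requisite Koszul dualities for the complete intersection directly. Your bookkeeping is correct throughout: $Z=\{F=0\}$ is smooth exactly because the gradients $\nabla Q_i$ are independent along the smooth $X$; the Jacobian ideal of $F$ in the Cox ring of $\bP^2\times\bP^7$ is precisely $I$; the degree arithmetic $(q+3)(1,2)-(3,8)=(q,2q-2)$ is right; the kernel of $T_{B,b_0}\to R_{(1,2)}$ is correctly identified, since $I_{(1,2)}$ is spanned by the $\mu_kQ_i$ and the $x_\ell\,\partial F/\partial x_j$ (with the one Euler relation $\sum_\ell x_\ell\,\partial F/\partial x_\ell=2F$ linking the two spans, consistent with $\dim R_{(1,2)}=108-9-64+1=36=\dim B-\dim\bPGL_8$); and you correctly dispose of the non-primitive part of $H^{2,2}$ via flatness of the hyperplane class. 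What each route buys: Terasoma's direct computation delivers the graded isomorphism and the multiplicative identification of $\bar\nabla$ in a single package, while your two-step argument outsources the hard residue analysis to the single ample hypersurface $Z$ but obliges you to document the two compatibilities you only assert --- that the Cayley isomorphism is induced by a correspondence defined over the base and hence commutes with Gauss--Manin, and that for the family of hypersurfaces $Z$ the infinitesimal period map is ring multiplication on the Jacobian ring. Both are established in the literature (Konno; Batyrev--Cox; Mavlyutov), so there is no genuine gap, only references to supply. One small correction: the second projection $Z\to\bP^2_\mu$ is the net of quadric \emph{sixfolds} $\{\sum_i\lambda_iQ_i=0\}\subset\bP^7$, not the quadric \emph{surface} bundle of Proposition~\ref{prop:bundle2}, which arises from the relative family of $2$-planes through a line $\ell\subset X$; this parenthetical slip does not affect your argument.
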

 
Recall that the primitive cohomology $H^{p,q}_{\mathrm{prim}}$ is the cokernel of the natural map $H^{p,q}(\bP^7)\to H^{p,q}( X)$. 

In Section \ref{exdensity}, we provide an explicit example $X=\mathcal X_{b_0}$ such that the period map \ref{periodexplicit} is surjective  (Proposition \ref{prop:surj}). 
Theorem~\ref{theo:main}, Part (2), then follows. In fact, by Proposition \ref{oneratf}, 
there exists a smooth intersection of three quadrics birational to a quadric bundle with a rational section. 
Similarly to \cite[Proposition 14]{HPT} the density of rational fibers follows from the infinitesimal criterion  that we verify in Proposition \ref{prop:surj}.

\section{Computations} 
\label{sect:comp}

We work over the complex numbers.  We first recall the construction of Beauville \cite[Exemple 1.4.4]{Beau77}: 

\begin{prop}
\label{prop:bundle2}
Let $X\subset \bP^7$ be a smooth complete intersection of three quadrics. 
Then $X$ is birational to a quadric bundle  over $\bP^2$, with discriminant curve  of degree $8$. 

Concretely, let $\ell\subset X$ be a line and 
$G_{\ell}\simeq\mathbb P^5$ the space of $2$-planes $\Pi\subset \mathbb P^7$ containing $\ell$. 
Then $X$ is birational to a quadric surface bundle 
$$
\pi: Q\to \mathbb P^2,
$$ 
where $Q\subset \mathbb P^2\times G_{\ell}$ is given by
\begin{equation}\label{geobundle}
Q=\left\{([\lambda_0:\lambda_1:\lambda_2], \Pi) |\quad\{ \lambda_0Q_0+\lambda_1Q_1+\lambda_2Q_2 =0\}\supset \Pi\right\}.
\end{equation}
\end{prop}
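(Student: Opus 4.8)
The plan is to translate the incidence condition defining $Q$ into linear algebra on the symmetric forms in the net, read off the bundle structure and the birational map from it, and extract the discriminant degree from the determinant of the associated $8\times 8$ matrices. Write $V=\bC^8$ and let $L\subset V$ be the $2$-dimensional subspace with $\bP(L)=\ell$ (such an $\ell$ exists, the Fano scheme of lines of $X$ being nonempty). For $\lambda=[\lambda_0:\lambda_1:\lambda_2]$ put $q_\lambda=\lambda_0Q_0+\lambda_1Q_1+\lambda_2Q_2$, with symmetric matrix $M_\lambda$ and bilinear form $b_\lambda$, both linear in $\lambda$; since $\ell\subset X$, the subspace $L$ is isotropic for every $q_\lambda$. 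A $2$-plane $\Pi\supset\ell$ is a $3$-dimensional $P\supset L$, so $G_\ell=\bP(V/L)\cong\bP^5$. First I would check, by writing $P=L\oplus\langle v\rangle$, that $\Pi=\bP(P)\subset\{q_\lambda=0\}$ is equivalent to $v\in L^{\perp_\lambda}$ together with $q_\lambda(v)=0$, a pair of conditions depending only on the class $\bar v\in L^{\perp_\lambda}/L$. As $q_\lambda$ is generically nondegenerate and $L$ is isotropic, the $4$-dimensional space $W_\lambda:=L^{\perp_\lambda}/L$ carries an induced form $\bar q_\lambda$, and the fibre $\pi^{-1}([\lambda])$ is precisely the quadric surface $\{\bar q_\lambda=0\}\subset\bP(W_\lambda)\cong\bP^3$. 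Globally the conditions $v\in L^{\perp_\lambda}$ are two equations of bidegree $(1,1)$ on $\bP^2\times G_\ell$, and $q_\lambda(v)=0$ is one equation of bidegree $(1,2)$, so $\pi\colon Q\to\bP^2$ is a quadric surface bundle.

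Next I would produce the birational map $Q\dashrightarrow X$. For general $([\lambda],\Pi)\in Q$, each conic $\Pi\cap\{Q_i=0\}$ contains $\ell$, hence splits as $\ell\cup m_i$ with $m_i$ a residual line in $\Pi\cong\bP^2$; since $q_\lambda|_\Pi=0$ one of the three forms is dependent on the others on $\Pi$, so $\Pi\cap X$ equals $\ell$ together with a single residual point $p=m_i\cap m_j$. The assignment $([\lambda],\Pi)\mapsto p$ is the map to $X$. For the inverse, a general $x\in X$ gives $\Pi:=\langle\ell,x\rangle$; since $x$ lies on all three residual lines $m_0,m_1,m_2$, and linear forms on $\Pi$ vanishing at $x$ form a $2$-dimensional space, the $m_i$ satisfy a unique relation $\sum\lambda_im_i=0$, recovering a unique $[\lambda]$ with $\Pi\subset\{q_\lambda=0\}$. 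A short check shows these maps are mutually inverse on dense opens: the residual point of $\langle\ell,x\rangle\cap X$ is again $x$.

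Finally, the discriminant curve $\Delta\subset\bP^2$ is the locus where $\bar q_\lambda$ degenerates, and this is where I expect the real work to lie, since $W_\lambda=L^{\perp_\lambda}/L$ varies with $\lambda$ and $\det\bar q_\lambda$ is not literally the determinant of one matrix linear in $\lambda$. I would argue via isotropic reduction: when $q_\lambda$ is nondegenerate the induced form on $L^{\perp_\lambda}/L$ is again nondegenerate, so $\Delta\subset\{\det M_\lambda=0\}$; conversely, if $q_\lambda$ degenerates with radical spanned by $r$, then automatically $r\in L^{\perp_\lambda}$, and whenever $r\notin L$, equivalently the map $\beta_\lambda\colon V/L\to L^\vee$, $\bar v\mapsto b_\lambda(v,\cdot)|_L$, is surjective, the class $\bar r$ lies in the radical of $\bar q_\lambda$. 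Hence on the locus where $\beta_\lambda$ has full rank the two degeneracy loci agree, so $\Delta=\{\det M_\lambda=0\}$, which is a plane curve of degree $8$ because $\det M_\lambda$ is homogeneous of degree $8$ in $\lambda$. The remaining, and most delicate, point is the behaviour along the finite locus where $\beta_\lambda$ drops rank (where $L$ meets the radical of $q_\lambda$): there I would use a local splitting $V/L=W_\lambda\oplus S$ and the bordered-determinant identity $\det M_\lambda=\pm\det(B_S)^2\,\det\bar q_\lambda$, with $B_S$ the pairing of $L$ with $S$, to confirm that the octic is reduced and really is the discriminant, with no drop in degree. Pinning this last bookkeeping down precisely is the step I expect to require the most care.
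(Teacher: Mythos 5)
Your construction is the classical one of Beauville, which is all the paper itself invokes: the paper offers no independent proof of Proposition~\ref{prop:bundle2} beyond citing \cite[Exemple 1.4.4]{Beau77}, choosing coordinates with $\ell:\,x_2=\cdots=x_7=0$, writing $Q_i=x_0L_i+x_1M_i+q_i$, and recording the equations (\ref{bundlexplicit}). Your two bidegree-$(1,1)$ conditions $v\in L^{\perp_\lambda}$ and the bidegree-$(1,2)$ condition $q_\lambda(v)=0$ are exactly those equations in coordinate-free form, and your residual-point/unique-relation correspondence is the standard birationality argument. (One genericity point you gloss: uniqueness of the relation $\sum\lambda_i m_i=0$ needs the three residual lines to span the pencil of lines through $x$; if this failed for general $x$, the fibres of $Q\to X$ would be positive-dimensional, impossible since both are irreducible fourfolds.)

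The step you flag as the most delicate is in fact vacuous, for a reason worth internalizing: $\beta_\lambda$ fails to have full rank exactly when $L\cap\operatorname{rad}(q_\lambda)\neq 0$, and a nonzero $u$ in this intersection gives a point $[u]\in\ell\subset X$ at which $dq_\lambda=2b_\lambda(u,\cdot)$ vanishes, i.e., the differentials $dQ_0,dQ_1,dQ_2$ are linearly dependent at a point of $X$ --- contradicting smoothness of $X$. Hence the bad locus is empty, $W_\lambda$ is $4$-dimensional for \emph{every} $\lambda\in\bP^2$, every fibre of $\pi$ is an honest quadric surface, and your identification $\Delta=\{\det M_\lambda=0\}$ holds globally with no bordered-determinant bookkeeping at all. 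Conversely, the assertion you make only in passing --- that $q_\lambda$ is generically nondegenerate, i.e.\ $\det M_\lambda\not\equiv 0$, without which ``degree $8$'' is vacuous --- is the claim that genuinely needs an argument, and smoothness again supplies it: if every member of the net were singular, take a local section $p(t)$ of singular points along a pencil $\lambda+t\mu$ and differentiate $M_{\lambda+t\mu}\,p(t)=0$; pairing with $p(0)$ kills the $\dot p$ term and yields $q_\mu(p(0))=0$ for all $\mu$, so the generic vertex would lie on every quadric of the net, hence on $X$, making $X$ singular there. With these two smoothness-based remarks substituted for your final paragraph, your proof is complete and agrees with the source the paper cites.
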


More explicitly, assume that the line is given by equations
$$
\ell: x_2=x_3=\ldots=x_7=0
$$ 
and write, for $i=0,1,2$,  
$$
Q_i=x_0L_i(x_2,x_3,\ldots, x_7)+x_1M_i(x_2,x_3,\ldots, x_7)+q_i(x_2,x_3,\ldots, x_7),
$$
where $L_i$ and $M_i$ are linear forms and $q_i$ is quadratic. Any $2$-plane $\Pi\subset\mathbb P^7$ 
containing $\ell$ intersects the $5$-plane $x_0=x_1=0$ in a unique point $[0:0:x_2:\cdots : x_7]$.  
This allows us to identify the space of $2$-planes $\Pi\subset \mathbb P^7$ containing $\ell$ with $\mathbb P^5$. 
Then the quadric bundle (\ref{geobundle}) is defined in $\bP^2\times \bP^5$  by the equations 
\begin{multline}\label{bundlexplicit}
\sum_{i=0}^2 \lambda_iL_i(x_2,x_3,\ldots, x_7)=\sum_{i=0}^2 \lambda_iM_i(x_2,x_3,\ldots, x_7)=\\=\sum_{i=0}^2 \lambda_iq_i(x_2,x_3,\ldots, x_7)=0. 
\end{multline}

\subsection{Fibers that are not stably rational}\label{secnsr}

Let  $X\subset \bP^7$ be the intersection of three quadrics\\
\begin{multline}\label{Xspecial}
Q_0: \,-x_0x_5+x_3^2+x_4x_6-2x_5^2=0;\\
Q_1: \, x_0x_5+x_1x_4+x_2^2-2x_5^2=0;\\
Q_2: x_0x_7-x_1x_6+x_5^2+x_7^2=0.
\end{multline}

Note that $X$ contains a line $\ell: x_2=\ldots=x_7=0$. Using equations (\ref{bundlexplicit}), we obtain that $X$ is 
birational to a quadric bundle $Q\to \bP^2$, defined in $\bP^2\times \bP^5$ as an intersection of two forms of bidegree $(1,1)$ and one
form of bidegree $(1,2)$: 
\begin{multline}\label{Qspecial}
(\lambda_0-\lambda_1)x_5=\lambda_2x_7, \quad \;
\lambda_1x_4=\lambda_2x_6\\
\lambda_1x_2^2+\lambda_0x_3^2+\lambda_0x_4x_6+(\lambda_2-2\lambda_0-2\lambda_1)x_5^2+\lambda_2x_7^2=0.
\end{multline}
In the open set $\lambda_2\neq 0$ we can define $X$ 
by a single equation
$$
\lambda_1x_2^2+\lambda_0x_3^2+\frac{\lambda_0\lambda_1}{\lambda_2}x_4^2+(\frac{(\lambda_0-\lambda_1)^2}{\lambda_2}+\lambda_2-2\lambda_0-2\lambda_1)x_5^2=0,
$$
hence, $X$ is birational to a hypersurface $Y\subset \bP^2\times \bP^3$ of bidegree $(2,2)$ defined by
\begin{equation}\label{ourExample}
\lambda_1\lambda_2x_2^2+\lambda_0\lambda_2x_3^2+\lambda_0\lambda_1x_4^2+F(\lambda_0,\lambda_1,\lambda_2)x_5^2=0,
\end{equation}
where $F(\lambda_0,\lambda_1,\lambda_2)=\lambda_0^2+\lambda_1^2+\lambda_2^2-2\lambda_0\lambda_1-2\lambda_0\lambda_2-2\lambda_1\lambda_2$.

\noindent This is precisely the hypersurface we considered in  \cite[Propositions 11, 12]{HPT}.

\begin{prop}
\label{prop:special}
Let $Q\subset \bP^2\times \bP^5$ be defined by the equations (\ref{Qspecial}) and let $Y\subset \bP^2\times \bP^3$ be the hypersurface given by the equation $(\ref{ourExample})$. Then
the birational map
\begin{multline}\label{map}
\varphi:Y\dashrightarrow Q,\\ (\lambda_0:\lambda_1:\lambda_2, x_2:\ldots: x_5)\mapsto\\ (\lambda_0:\lambda_1:\lambda_2, \lambda_2 x_2: \lambda_2x_3:\lambda_2x_4:\lambda_2 x_5:\lambda_1x_4:(\lambda_0-\lambda_1)x_5)
\end{multline}
extends to the following diagram
$$
\xymatrix{&\tilde Y\ar_{\psi}[ld]\ar^{\tilde \varphi}[rd] &\\
Y\ar@{-->}[rr]&&Q
}
$$
 where the morphisms $\psi:\tilde Y\to Y$ and $\tilde \varphi:\tilde Y\to Q$ are birational and universally $\CH_0$-trivial.
\end{prop}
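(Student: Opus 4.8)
The plan is to take $\tilde Y$ to be the closure of the graph of $\varphi$ inside $\bP^2\times\bP^3\times\bP^5$, where the two projective factors carry the coordinates $x=(x_2,\dots,x_5)$ and $x'=(x_2',\dots,x_7')$ and the common $\bP^2$ the coordinates $\lambda$; then $\psi$ and $\tilde\varphi$ are the two projections onto $Y$ and $Q$. Both projections are birational because $\varphi$ admits an explicit inverse: over $\{\lambda_2\neq 0\}$ the two $(1,1)$-equations of $Q$ recover $x_6'=\lambda_1x_4'/\lambda_2$ and $x_7'=(\lambda_0-\lambda_1)x_5'/\lambda_2$ from $x_4',x_5'$, so $Q\dashrightarrow Y$ is the linear projection $(x_2':\cdots:x_7')\mapsto(x_2':x_3':x_4':x_5')$. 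Inspecting the six forms defining $\varphi$, each is divisible by $\lambda_2$ except $\lambda_1x_4$ and $(\lambda_0-\lambda_1)x_5$; hence $\varphi$ is already a morphism on $\{\lambda_2\neq 0\}$, both $\psi$ and $\tilde\varphi$ restrict to isomorphisms there, and the whole problem is concentrated over the line $L=\{\lambda_2=0\}\subset\bP^2$, where $\varphi$ is undefined exactly along $\{\lambda_1x_4=(\lambda_0-\lambda_1)x_5=0\}$.

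It then remains to verify that $\psi$ and $\tilde\varphi$ are universally $\CH_0$-trivial, for which I would apply the fiber criterion recalled after Theorem~\ref{thm:main-help}: it suffices that over every scheme point the fiber is a union of smooth, geometrically irreducible, $\kappa(x)$-rational components meeting along zero-cycles of degree one. Over $\{\lambda_2\neq 0\}$ the fibers of both maps are single reduced points, so the criterion is automatic and the work takes place over $L$. Viewing $Y\to\bP^2$ and $Q\to\bP^2$ as quadric surface bundles, I would stratify $L\simeq\bP^1_{[\lambda_0:\lambda_1]}$ by the degeneration type of the fibers: the generic points, where $\lambda_0\lambda_1(\lambda_0-\lambda_1)\neq 0$, and the three special points $[1:0:0]$, $[0:1:0]$ and $[1:1:0]$, the last being the point where $F$ vanishes and the degeneration is most severe.

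For each stratum I would read the fibers of $\psi$ and $\tilde\varphi$ directly off the defining equations. Over a generic point of $L$ the fiber of $Y$ is a pair of planes meeting along $\{x_4=x_5=0\}$, the fiber of $Q$ is a pair of planes meeting along the line $\{x_2'=\cdots=x_5'=0\}\simeq\bP^1_{x_6',x_7'}$, and $\varphi$ collapses the former onto the latter; passing to the graph closure resolves this, so that the positive-dimensional fibers of $\psi$ sit over the singular line of $Y$ while those of $\tilde\varphi$ sit over the singular line of $Q$. In each case the nontrivial fibers should be projective spaces, or transverse unions of projective spaces meeting in a single point, hence rational over the residue field as required.

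I expect the genuine obstacle to be the point $[1:1:0]$. There the entire fiber $\{x_4=0\}\simeq\bP^2$ of $Y$ lies in the indeterminacy locus of $\varphi$, the fiber of $Q$ degenerates to the quadric cone $(x_2')^2+(x_3')^2-4(x_5')^2=0$ with vertex line $\bP^1_{x_6',x_7'}$, and the fibers of both $\psi$ and $\tilde\varphi$ over this point are positive-dimensional. The delicate part is to exhibit the reduced structure of these exceptional fibers, to check that each irreducible component is geometrically irreducible and rational over the appropriate residue field, and to verify that distinct components meet in a zero-cycle of degree one. Keeping the analysis geometric — building every fiber out of linear subspaces and low-rank quadric cones rather than passing to explicit blow-up charts — should make this bookkeeping tractable, and the same method disposes of the milder special points $[1:0:0]$ and $[0:1:0]$.
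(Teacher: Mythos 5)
Your setup is sound and matches the paper's first steps: the indeterminacy locus you identify over $\{\lambda_2=0\}$ is exactly the union of the three components the paper calls $Y_1=\{\lambda_2=x_4=x_5=0\}$, $Y_2=\{\lambda_1=\lambda_2=x_5=0\}$, $Y_3=\{\lambda_0-\lambda_1=\lambda_2=x_4=0\}$, the inverse map over $\{\lambda_2\neq 0\}$ is correct, and your reading of the special fibers (the double planes of $Y$ over $\lambda_2=0$, the cone $x_2^2+x_3^2-4x_5^2=0$ with vertex line over $[1:1:0]$) is accurate. But there is a genuine gap: everything that actually constitutes the proof is deferred. You write that the nontrivial fibers of $\psi$ and $\tilde\varphi$ ``should be'' projective spaces or transverse unions thereof, and you explicitly leave open the reduced structure, geometric irreducibility, $\kappa(x)$-rationality, and degree-one intersection conditions at the bad points. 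These are not formalities. Your $\tilde Y$ is the blowup of the ideal $(\lambda_2x_2,\dots,\lambda_2x_5,\lambda_1x_4,(\lambda_0-\lambda_1)x_5)$ restricted to $Y$, and fibers of such a blowup are projectivized normal cones that in general need not be reduced, smooth, or rational; nothing guarantees a priori that the fiber criterion of \cite[Proposition 1.8]{ct-pirutka} applies. Moreover the criterion must be checked at \emph{every} scheme point, including generic points of the singular strata whose residue fields are not algebraically closed, so rationality over $\kappa(x)$ has to be verified from explicit equations, not from a picture over $\bC$.

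The paper resolves precisely this by replacing the graph closure with a controlled construction: three successive blowups, first of $Y_1$, then of the disjoint proper transforms of $Y_2$ and $Y_3$, with the extension of $\varphi$, the exceptional divisors, and the residual indeterminacy recomputed in every affine chart. In each chart one sees directly that every fiber of the blowup map and of the extended map is a reduced point, a line, or a plane over the residue field (for instance, on the first chart the image of the exceptional divisor is cut out by $\lambda_2=0$, $\lambda_1x_2^2+\lambda_0x_3^2=0$, which is a single point of $\bP^1\times\bP^1$, with fiber an affine plane in $(u_4,u_5)$), so universal $\CH_0$-triviality follows at once. Your stratification by the points $[1:0:0]$, $[0:1:0]$, $[1:1:0]$ of $L$ would have to be supplemented by exactly this kind of chart-level computation --- note also that $[1:0:0]$ is just as severe as $[1:1:0]$, since the entire reduced fiber $\{x_5=0\}$ there lies in the indeterminacy locus ($Y_2$). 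As it stands, your proposal is a correct plan whose decisive verifications are missing, so it does not yet prove the proposition.
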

\begin{proof} First note that $\varphi$ is indeed a birational map between $Y$ and $Q$.
The locus $Y^{nd}\subset Y$ where the map $\varphi$ is not defined is a union of three components\\
$Y_1: \lambda_2=0, x_4=x_5=0;$\\
$Y_2: \lambda_1=\lambda_2=0, x_5=0$;\\
$Y_3: \lambda_0-\lambda_1=0, \lambda_2=0, x_4=0$.\\
Note that $Y_1$ is isomorphic to a product  $\bP^1_{\lambda_0:\lambda_1}\times \bP^1_{x_2:x_3}$, and similarly $Y_2$  is isomorphic to a projective plane $\bP^2_{x_2:x_3:x_4}$ with homogeneous coordinates $[x_2:x_3:x_4]$ and $Y_3\simeq \bP^2_{x_2:x_3:x_5}$.

We construct $\tilde Y$ by successive blowups  of $Y_1$,  the proper transform of $Y_2$ and the proper transform of $Y_3$. After each blowup we verify:
\begin{itemize}
\item the indeterminacy locus of $\varphi$ on the blowup;
\item the universal $\CH_0$-triviality of fibers of the extension
of $\varphi$ to the blowup 
and of the blowup map. In each case we obtain that the corresponding fiber is either reduced to a point or projective (or affine, if we compute on open charts) spaces. We provide details  for the first computations and the expressions in the coordinates for the remaining charts.\\
\end{itemize}

{\it Blowup of $Y_1$.} We have three charts:
\begin{enumerate}
\item $U_1: x_4=\lambda_2u_4$, $x_5=\lambda_2u_5$,  the exceptional divisor is given by $\lambda_2=0$.  Since we blow up the locus  $\lambda_2=0, x_4=x_5=0$, we consider one of the charts $\lambda_0\neq 0$ or $\lambda_1\neq 0$ of $\bP^2$ and one of the charts $x_2\neq 0$ or $x_3\neq 0$ of $\bP^3$. \\
We extend $\varphi$ to a birational map $\varphi_1:U_1\dashrightarrow Q$,
$$(\lambda_0,\lambda_1, \lambda_2, x_2, x_3, u_4, u_5)\mapsto (\lambda_0, \lambda_1,\lambda_2, x_2, x_3, \lambda_2u_4, \lambda_2u_5, \lambda_1u_4, (\lambda_0-\lambda_1)u_5).$$ Since one of coordinates $\lambda_0,\lambda_1$ is nonzero, and one of coordinates $x_2, x_3$ is nonzero, we have that  $\varphi_1$ is  well-defined. The image of $\varphi_1$ is contained in the closure of the image of $\varphi$, hence it is contained in $Q$, so that we obtain a map  $\varphi_1: U_1\to Q$.

The image of the exceptional divisor is the set of points 
$$
E_1=(\lambda_0, \lambda_1, 0, x_2, x_3, 0,0, \lambda_1u_4, (\lambda_0-\lambda_1)u_5).
$$ 
Then for any field $k'/\bC$ and for any point $P\in E_1(k')$ the fiber $\varphi_1^{-1}(P)$ is either a point or a line (if $\lambda_1=0$ or $\lambda_0-\lambda_1=0$), which ensures the universal $\CH_0$-triviality of the map $\varphi_1$ on this chart. 

The equation defining $U_1$ is
$$\lambda_1x_2^2+\lambda_0x_3^2+\lambda_0\lambda_1\lambda_2u_4^2+F(\lambda_0, \lambda_1, \lambda_2)\lambda_2u_5^2=0.$$
Let $\psi_1: U_1\to Y$ be the blowup map. Then, the image $I_1$ of the exceptional divisor is given by the conditions 
$$
\lambda_2=0,\quad \lambda_1x_2^2+\lambda_0 x_3^2=0.
$$ 
The latter condition defines a point since the coordinates $\lambda_0:\lambda_1$ and $x_2:x_3$ are homogeneous.
Then for any field $k'/\bC$ and for any point $P\in I_1(k')$ the fiber $\psi_1^{-1}(P)$ is  a plane with coordinates $u_4$ and $u_5$, which ensures the universal $\CH_0$-triviality of the map $\psi_1$ on this chart.

\item $U_2:$
\begin{itemize}
\item  change of variables: $$ \lambda_2=x_4\lambda_2', x_5=x_4u_5;$$
\item equation defining the blowup: $$\lambda_1\lambda_2'x_2^2+\lambda_0\lambda_2^2x_3^2+\lambda_0\lambda_1x_4+F(\lambda_0, \lambda_1, \lambda_2'x_4)x_4u_5^2=0.$$
\item  exceptional divisor: $$x_4=0,  \lambda_1\lambda_2'x_2^2+\lambda_0\lambda_2^2x_3^2=0.$$
\item extension of $\varphi$ is given by: $$(\lambda_0, \lambda_1,\lambda_2'x_4, \lambda_2'x_2, \lambda_2'x_3, \lambda_2'x_4, \lambda_2'x_4u_5, \lambda_1, (\lambda_0-\lambda_1)u_5).$$
\item domain, where the extension is not defined is the proper transform $Y_2'$ of $Y_2$: $$\lambda_1=\lambda_2'=0, u_5=0.$$
\item the image of the exceptional divisor: $$(\lambda_0, \lambda_1, 0, \lambda_2'x_2, \lambda_2'x_3, 0, 0, \lambda_1, (\lambda_0-\lambda_1)u_5).$$
\end{itemize}

\item $U_3: $
\begin{itemize}
\item  change of variables:  $$\lambda_2=x_5\lambda_2', x_4=x_5u_4;$$
\item equation defining the blowup: $$\lambda_1\lambda_2'x_2^2+\lambda_0\lambda_2'x_3^2+\lambda_0\lambda_1x_5u_4^2+F(\lambda_0, \lambda_1, \lambda_2'x_5)x_5=0;$$
\item  exceptional divisor: $$x_5=0, \lambda_1\lambda_2'x_2^2+\lambda_0\lambda_2^2x_3^2=0;$$
\item extension of $\varphi$ is given by:
$$(\lambda_0, \lambda_1,\lambda_2'x_5, \lambda_2'x_2, \lambda_2'x_3, \lambda_2'x_5u_4, \lambda_2'x_5, \lambda_1u_4, \lambda_0-\lambda_1)$$
\item domain, where the extension is not defined is the proper transform $Y_3'$ of $Y_3$:
$$\lambda_0-\lambda_1=\lambda_2'=0, u_4=0.$$
\item the image of the exceptional divisor: $$(\lambda_0, \lambda_1, 0, \lambda_2'x_2, \lambda_2'x_3, 0, 0, \lambda_1u_4, \lambda_0-\lambda_1).$$
\end{itemize}

\end{enumerate}

{\it Blowup of the proper transforms $Y_2'$ and $Y_3'$}

Note that $Y_2$ and $Y_3$, and hence  their proper transforms, do not intersect. Hence we can use charts $U_2$ and $U_3$ independently for their blowups.

\begin{enumerate}
\item On the chart $U_2$:
\begin{enumerate}
\item \begin{itemize}
\item  change of variables: $$\lambda_1=\lambda_2'\lambda_1', u_5=\lambda_2'v_5$$
\item exceptional divisor:  $$\lambda_2'=0,\lambda_0x_3^2+\lambda_0\lambda_1'x_4;$$
\item extension of $\varphi$ is everywhere defined:
$$(\lambda_0,\lambda_1'\lambda_2', x_4\lambda_2', x_2, x_3, x_4, \lambda_2'x_4v_5, \lambda_1', (\lambda_0-\lambda_1'\lambda_2')v_5);$$
\item the image of the exceptional divisor: $$(1,0,0, x_2, x_3, x_4, x_4v_5, \lambda_1', v_5).$$
\end{itemize}

\item
\begin{itemize}
\item  change of variables:  $$\lambda_2'=\lambda_1\lambda_2'', u_5=\lambda_1v_5;$$
\item exceptional divisor: $$\lambda_1=0, \lambda_0\lambda_2''x_3^2+\lambda_0x_4=0;$$
\item extension of $\varphi$ is everywhere defined:
$$(\lambda_0:\lambda_1:\lambda_1\lambda_2'',\lambda_2'' x_2, \lambda_2''x_3, \lambda_2''x_4, \lambda_1\lambda_2''x_4v_5,1, v_5(\lambda_0-\lambda_1));$$
\item the image of the exceptional divisor: $$(1,0,0, \lambda_2''x_2, \lambda_2''x_3, \lambda_2''x_4, 0, 1, v_5).$$
\end{itemize}

\item
\begin{itemize}
\item  change of variables: $$\lambda_2'=u_5\lambda_2'', \lambda_1=u_5\lambda_1'';$$
\item exceptional divisor: $$u_5=0, \lambda_0\lambda_2''x_3^2+\lambda_0\lambda_1''x_4=0;$$
\item extension of $\varphi$ is everywhere defined:
$$(\lambda_0,\lambda_1''u_5,\lambda_2''u_5,\lambda_2'' x_2, \lambda_2''x_3, \lambda_2''x_4, \lambda_2''x_4u_5, \lambda_1'',\lambda_0-\lambda_1''u_5);$$ 
\item the image of the exceptional divisor: $$(1,0,0, \lambda_2''x_2, \lambda_2''x_3, \lambda_2''x_4, 0, \lambda_1'', 1).$$
\end{itemize}

\end{enumerate}

\item On the chart $U_3$:
\begin{enumerate}
\item \begin{itemize}
\item  change of variables: $$\lambda_0-\lambda_1=\lambda_2'\lambda_0', u_4=\lambda_2'v_4;$$ 
\item exceptional divisor: $$\lambda_2'=0, \lambda_1x_2^2+\lambda_1x_3^2-4\lambda_1x_5=0;$$
\item extension of $\varphi$  is everywhere defined:
$$(\lambda_1+\lambda_2'\lambda_0', \lambda_1, \lambda_2'x_5, x_2, x_3, \lambda_2'x_5v_4, x_5, \lambda_1v_4, \lambda_0');$$
\item the image of the exceptional divisor: $$(\lambda_1, \lambda_1, 0, x_2, x_3, 0,x_5, \lambda_1v_4, \lambda_0').$$
\end{itemize}

\item \begin{itemize}
\item  change of variables: $$\lambda_2'=(\lambda_0-\lambda_1)\lambda_2'', u_4=(\lambda_0-\lambda_1)v_4;$$
\item exceptional divisor: $$(\lambda_0-\lambda_1)=0, \lambda_1\lambda_2''x_2^2+\lambda_1\lambda_2''x_3^2-4\lambda_1\lambda_2''x_5=0;$$
\item extension of $\varphi$  is everywhere defined:
$$(\lambda_0, \lambda_1, \lambda_2''(\lambda_0-\lambda_1)x_5, \lambda_2''x_2, \lambda_2''x_3, (\lambda_0-\lambda_1)\lambda_2''x_5v_4,\lambda_2''x_5, \lambda_1v_4, 1 );$$
\item the image of the exceptional divisor: $$(\lambda_1, \lambda_1, 0, \lambda_2''x_2, \lambda_2''x_3, 0, \lambda_2''x_5, \lambda_1v_4, 1).$$
\end{itemize}

\item \begin{itemize}
\item  change of variables: $$\lambda_2'=u_4\lambda_2'', \lambda_0-\lambda_1=u_4\lambda_0';$$
\item exceptional divisor: $$u_4=0, \lambda_1\lambda_2''x_2^2+\lambda_1\lambda_2''x_3^2-4\lambda_1\lambda_2''x_5=0;$$
\item extension of $\varphi$  is everywhere defined:
$$(\lambda_1+u_4\lambda_0', \lambda_1, \lambda_2''u_4x_5, \lambda_2''x_2, \lambda_2''x_3, \lambda_2''x_5u_4, \lambda_2''x_5, \lambda_1, \lambda_0');$$
\item the image of the exceptional divisor:  $$(\lambda_1, \lambda_1, 0, \lambda_2''x_2, \lambda_2''x_3, 0, \lambda_2''x_5, \lambda_1, \lambda_0').$$
\end{itemize}
\end{enumerate}

\end{enumerate}

\end{proof}

\begin{coro}\label{coro:special2}
Let $Q\subset \bP^2\times \bP^5$ be defined by the equations (\ref{Qspecial}). Then $Q$ admits 
a resolution of singularities $\beta:\tilde Q\to Q$ such that  
\begin{itemize}
\item[(i)] the variety $\tilde Q$ is not universally $\CH_0$-trivial; 
\item[(ii)] the map $\beta$ is a universally $\CH_0$-trivial morphism.
\end{itemize}
\end{coro}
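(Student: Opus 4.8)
The plan is to deduce both assertions from a single smooth variety $\tilde Q$ that dominates, over $Y$, both the desingularization of $Y$ supplied by \cite[Propositions 11, 12]{HPT} and the intermediate variety $\tilde Y$ of Proposition~\ref{prop:special}. Once such a common model is in place, every universal $\CH_0$-triviality statement becomes formal, since the only genuinely geometric input — the universal $\CH_0$-triviality of $\psi$ and $\tilde\varphi$ — has already been checked chart-by-chart in Proposition~\ref{prop:special}, and the singularities of $Q$ itself never need to be analyzed.

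First I would recall the inputs. By \cite[Propositions 11, 12]{HPT} the hypersurface $Y$ of \eqref{ourExample} admits a resolution $\rho\colon\hat Y\to Y$ with $\hat Y$ smooth and $\rho$ universally $\CH_0$-trivial, while $\hat Y$ is \emph{not} universally $\CH_0$-trivial (this is condition (O) for $Y$). By Proposition~\ref{prop:special} there are birational, universally $\CH_0$-trivial morphisms $\psi\colon\tilde Y\to Y$ and $\tilde\varphi\colon\tilde Y\to Q$. Since $\hat Y$ is smooth, I would then use resolution of indeterminacy to choose a composition of blow-ups of smooth centres $a\colon\tilde Q\to\hat Y$, with $\tilde Q$ smooth, so that the birational map $\psi^{-1}\circ\rho\colon\hat Y\dashrightarrow\tilde Y$ becomes a morphism $b\colon\tilde Q\to\tilde Y$. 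As a composition of blow-ups of smooth centres in a smooth variety, the fibres of $a$ are points or projective spaces, so $a$ is universally $\CH_0$-trivial by the criterion of \cite[Proposition~1.8]{ct-pirutka}, \cite[Lemma~2.4]{ct-pir-cyclic}. Setting $\beta:=\tilde\varphi\circ b\colon\tilde Q\to Q$ produces a proper birational morphism from a smooth variety, i.e.\ a resolution of singularities of $Q$.

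To prove (ii) I would compare the two morphisms $\tilde Q\to Y$ given by $\rho\circ a$ and by $\psi\circ b$: both are birational over $Y$ and agree on a dense open set, hence coincide with a single morphism $c$. For any extension $k'/\bC$, functoriality of proper push-forward of zero-cycles gives $c_*=\rho_*\circ a_*=\psi_*\circ b_*$, and since $\rho_*$, $a_*$, $\psi_*$ are isomorphisms we obtain
\[
b_*=\psi_*^{-1}\circ\rho_*\circ a_* ,
\]
an isomorphism for every $k'$; thus $b$ is universally $\CH_0$-trivial, and so is $\beta=\tilde\varphi\circ b$, which is (ii). For (i) I would note that universal $\CH_0$-triviality of $a$ makes $a_*\colon\CH_0(\tilde Q_{k'})\to\CH_0(\hat Y_{k'})$ a degree-compatible isomorphism for all $k'$; as $\hat Y$ is not universally $\CH_0$-trivial, neither is $\tilde Q$. (Alternatively, (i) is immediate from birational invariance of unramified cohomology: $\tilde Q$ is birational to $Y$ by Proposition~\ref{prop:special}, so $H^2_{nr}(\bC(\tilde Q)/\bC,\bZ/2)\cong H^2_{nr}(\bC(Y)/\bC,\bZ/2)\neq 0$, and a nonzero unramified class obstructs universal $\CH_0$-triviality of the smooth projective $\tilde Q$.)

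The main obstacle is conceptual rather than computational. All the hard local work — verifying that each blow-up fibre of $\varphi$ and of the desingularization is a point or a projective space — has already been discharged in Proposition~\ref{prop:special}. What must be arranged correctly here is the bookkeeping: one has to pass to a common smooth model $\tilde Q$ lying over $Y$ so that the universal $\CH_0$-triviality of $b\colon\tilde Q\to\tilde Y$, and hence of $\beta$, can be \emph{deduced} from the cancellation $b_*=\psi_*^{-1}\rho_*a_*$, rather than checked directly on $Q$. The only external ingredient beyond Proposition~\ref{prop:special} is the smooth resolution $\hat Y\to Y$ of \cite{HPT}, which both supplies the obstruction needed for (i) and provides the smooth base on which resolution of indeterminacy yields the universally $\CH_0$-trivial morphism $a$.
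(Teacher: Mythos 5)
Your argument is correct and its overall architecture is the same as the paper's: both proofs pass to a smooth model dominating simultaneously $\tilde Y$ (from Proposition~\ref{prop:special}) and the universally $\CH_0$-trivial resolution of $Y$ from \cite[Proposition 12]{HPT}, deduce universal $\CH_0$-triviality of the induced map to $\tilde Y$ by the cancellation $b_*=\psi_*^{-1}\circ\rho_*\circ a_*$ in the diagram over $Y$, and prove (i) exactly as the paper does, via $H^2_{nr}(\bC(Y)/\bC,\bZ/2)\neq 0$ from \cite[Proposition 11]{HPT} and birational invariance. The one genuine divergence is the justification of the auxiliary birational morphism between smooth models: the paper takes an arbitrary smooth $\tilde Z$ resolving $Z\dashrightarrow \tilde Y$ and then invokes \emph{weak factorization} to see that $\tilde f:\tilde Z\to Z$ is universally $\CH_0$-trivial, whereas you arrange from the outset, by Hironaka elimination of indeterminacy, that $a:\tilde Q\to \hat Y$ is a composition of blow-ups along smooth centres, so that its universal $\CH_0$-triviality follows from the elementary blow-up criterion alone. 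Your route buys a lighter toolkit (no appeal to weak factorization); the paper's formulation is insensitive to how the resolution of indeterminacy is produced. One small imprecision to fix: the fibres of the \emph{composition} $a$ need not themselves be points or projective spaces (they are iterated blow-ups of such), so the correct justification is that each individual blow-up in the chain is universally $\CH_0$-trivial and that universal $\CH_0$-triviality is preserved under composition of push-forwards --- which is what your cancellation argument in fact uses.
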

\begin{proof}
We use Proposition~\ref{prop:special}: $Q$ is birational to a variety $Y$ with $H^2_{nr}(\bC(Y)/\bC,\bZ/2)\neq 0$ by  \cite[Proposition 11]{HPT}. In particular, property (i) holds for any resolution $\tilde Q$ of $Q$. 

In  \cite[Proposition 12]{HPT} we constructed a resolution of singularities $f:Z\to Y$ such that $f$ is 
a universally $\CH_0$-trivial morphism. Then there is birational map $\tilde f:\tilde Z\to Z$ with $\tilde Z$ smooth, 
such that the rational map $Z\dashrightarrow \tilde Y$ extends to a map $\tilde Z\to \tilde Y$:
$$
\xymatrix{
\tilde Z\ar_{\tilde f}[d]\ar[dr]& &\\
Z\ar_{f}[d]\ar@{-->}[r]&\tilde Y\ar_{\psi}[ld]\ar^{\tilde \varphi}[rd] &\\
Y\ar@{-->}[rr]&&Q
}
$$

Note that the map $\tilde f$ is universally $\CH_0$-trivial: by weak factorization, $\tilde f$ factors through blow-ups and blow-downs at smooth centers, each of these maps is universally $\CH_0$-trivial.  Hence, in the diagram above, the maps $\tilde f,f,\psi,\tilde \varphi$ are universally $\CH_0$-trivial. We deduce from the diagram that the composite map $\tilde Z\to Q$ is also universally $\CH_0$-trivial, which shows (ii).
\end{proof}

\noindent {\it Proof of Theorem~\ref{theo:main}, Part (1):}\\
From Theorem~\ref{thm:main-help} and Corollary~\ref{coro:special2}  we deduce that a very general quadric bundle defined by equations (\ref{bundlexplicit}) is not universally $\CH_0$-trivial. In particular, there exists a smooth intersection of three quadrics $X$ birational to a smooth quadric bundle $Q$ defined by an equation of type (\ref{bundlexplicit}), such that $Q$ is  not universally $\CH_0$-trivial. Since universal $\CH_0$-triviality is a birational invariant of smooth projective varieties, we deduce that $X$ is not universally $\CH_0$-trivial. Then Theorem~\ref{theo:main}, Part (1),  follows directly from Theorem~\ref{thm:main-help}, applied to the universal family $\phi:\mathcal X \ra B$ of 
smooth complete intersections of three quadrics in $\bP^7$. 
\qed

\subsection{One rational fiber}
\label{oneratf}

Consider the quadrics

\begin{multline*}
Q_0: \quad x_0(x_3+x_5+2x_6+3x_7)+x_1(-x_5+5x_6+2x_7)-\\-x_2x_3-x_2x_4+x_2x_5+x_3^2-x_4x_6+x_5^2+x_6^2+x_7^2=0;\\
Q_1: \quad x_0(-x_2+3x_5+7x_6+11x_7)+x_1(x_4+9x_5+4x_6+x_7)+\\+x_2^2-x_2x_3+2x_3x_6+x_4^2+3x_4x_7+2x_5^2+3x_6^2+5x_7^2=0;\\
Q_2: \quad x_0(11x_5+13x_6+8x_7)+x_1(-x_3+6x_5+7x_6+3x_7)+\\+x_2^2+5x_2x_7-x_3x_4+9x_3x_5+13x_5^2+4x_6^2+11x_7^2=0.
\end{multline*}

\begin{prop}
\label{prop:section} 
Let $X\subset \bP^7$ be the intersection 
$$
Q_0=Q_1=Q_2=0
$$
Then $X$ is smooth and rational. 
%$X$ is birational to a quadric bundle $Q\to \bP^2$ with a rational section. 
\end{prop}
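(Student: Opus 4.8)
The strategy is to exhibit an explicit line $\ell\subset X$, apply the Beauville construction of Proposition~\ref{prop:bundle2} to reduce $X$ to a quadric surface bundle $\pi\colon Q\to\bP^2$, and then produce a rational section of $\pi$ by writing down an explicit one-parameter family of planes $\Pi$ contained in the quadrics of the pencil. By Springer's theorem a quadric bundle with a rational point over the function field $\bC(\bP^2)$ has a rational section, and a quadric surface bundle over $\bP^2$ admitting a section is rational (the generic fiber becomes a smooth quadric surface with a rational point, hence rational over $\bC(\bP^2)$, so $Q$ is rational over $\bC$, and $X$ is birational to $Q$). Thus rationality of $X$ reduces to producing such a section, and smoothness of $X$ is a separate, computational check.

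\emph{First steps.} I would begin by locating a line on $X$. Inspecting the three quadrics, one looks for a common linear space; the natural candidate is to solve the system for a line expressed parametrically, or to identify a pair of coordinates $x_i,x_j$ in terms of which, after setting the remaining coordinates to zero, all three $Q_k$ vanish identically. Having fixed such a line $\ell$, I would put it in the normal form $\ell\colon x_2=\dots=x_7=0$ by a linear change of coordinates, rewrite each $Q_i=x_0L_i+x_1M_i+q_i$ as in Proposition~\ref{prop:bundle2}, and thereby obtain the explicit quadric bundle $Q\subset\bP^2\times\bP^5$ defined by the two bidegree-$(1,1)$ forms $\sum\lambda_iL_i=\sum\lambda_iM_i=0$ together with the bidegree-$(1,2)$ form $\sum\lambda_i q_i=0$, following equations~\eqref{bundlexplicit}.

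\emph{Producing the section.} The heart of the argument is to find a rational section of $\pi\colon Q\to\bP^2$, i.e.\ a rational map $\bP^2\dashrightarrow Q$ assigning to $[\lambda_0:\lambda_1:\lambda_2]$ a plane $\Pi$ with $\sum\lambda_iL_i(\Pi)=\sum\lambda_iM_i(\Pi)=\sum\lambda_iq_i(\Pi)=0$. Concretely, the two linear equations in the $(x_2,\dots,x_7)$ cut out, for generic $\lambda$, a $\bP^3\subset\bP^5$, on which the remaining equation is a quadric; a section amounts to a rational point of this quadric defined over $\bC(\lambda_0,\lambda_1,\lambda_2)$. I would search for such a point of a particularly simple shape, for instance one supported on a small subset of the coordinates so that the quadratic form restricts to something manifestly isotropic (e.g.\ a form in which one variable appears only linearly, or in which two of the coordinate squares have opposite coefficients allowing an obvious rational zero). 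Once a candidate section $s(\lambda)$ is written down, verifying that it satisfies all three equations identically in $\lambda$ is a direct substitution.

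\emph{Smoothness and the main obstacle.} Smoothness of $X$ is checked by confirming that the Jacobian of $(Q_0,Q_1,Q_2)$ has maximal rank $3$ along $X$; since the equations are chosen with generic-looking coefficients, this should hold away from an empty locus and can in principle be verified by a Gr\"obner-basis computation showing the singular ideal is irrelevant. The main obstacle is the construction of the section: the quadratic form $\sum_i\lambda_i\,q_i$ restricted to the $\bP^3$ cut out by the linear conditions is a ternary-to-quaternary quadratic form with coefficients in $\bC(\lambda)$, and its isotropy over this function field is not automatic. The delicate point is therefore to choose the coefficients of $Q_0,Q_1,Q_2$ so that, while keeping $X$ smooth, the resulting quadric bundle acquires an obvious rational section; in practice one reverse-engineers the section first and then checks smoothness, rather than the other way around. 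I expect the verification that the chosen $s(\lambda)$ is a genuine section to reduce to a short identity, while confirming smoothness is the more laborious but routine part.
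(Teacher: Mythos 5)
Your proposal follows essentially the same route as the paper: exhibit the line $\ell\colon x_2=\dots=x_7=0$, pass to the quadric surface bundle of Proposition~\ref{prop:bundle2}, verify smoothness by computer algebra (Magma in the paper), and produce an explicit rational section — the paper's section is exactly of the ``simple shape'' you predict, namely the plane spanned by $\ell$ and $[0:0:\lambda_0:\lambda_1:\lambda_2:0:0:0]$, checked by direct substitution into~\eqref{bundlexplicit}, with the quadrics indeed reverse-engineered so that this works. One minor simplification: once you have an actual rational point of the generic fiber you need not invoke Springer's theorem at all, since the point itself is the section.
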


\begin{proof}
A Magma \cite{Magma} computation shows that $X$ is smooth. Furthermore, 
$X$ contains a line 
$$
\ell: x_2=\ldots=x_7=0.
$$ 
As in Proposition \ref{prop:bundle2}, considering the space
$G_{\ell}\simeq\mathbb P^5$ of $2$-planes $\Pi\subset\mathbb P^7$ containing $\ell$, 
we find that $X$ is birational to a fibration in quadrics $Q\to \mathbb P^2$, 
where $Q\subset \mathbb P^2\times G_{\ell}$, 
$$
Q=\{([\lambda_0:\lambda_1:\lambda_2], \Pi)|\quad \{ \lambda_0Q_0+\lambda_1Q_1+\lambda_2Q_2=0\} \supset \Pi\}.
$$
The first projection $Q\to \mathbb P^2$ admits a rational section: 
the plane containing $\ell$ and the point 
$[0:0:\lambda_0:\lambda_1:\lambda_2:0:0:0]$ is contained in $\{ \lambda_0Q_0+\lambda_1Q_1+\lambda_2Q_2=0\}$.
Indeed, by (\ref{bundlexplicit}), we have that $Q\subset \bP^2\times \bP^5$ is defined by the equations:
\begin{multline*}
\lambda_0(x_3+x_5+2x_6+3x_7)+\lambda_1(-x_2+3x_5+7x_6+11x_7)+\lambda_2(11x_5+13x_6+8x_7)=0\\
\lambda_0(-x_5+5x_6+2x_7)+\lambda_1(x_4+9x_5+4x_6+x_7)+\lambda_2(-x_3+6x_5+7x_6+3x_7)=0\\
\lambda_0(-x_2x_3-x_2x_4+x_2x_5+x_3^2-x_4x_6+x_5^2+x_6^2+x_7^2)+\lambda_1(x_2^2-x_2x_3+2x_3x_6+x_4^2+\\+3x_4x_7+2x_5^2+3x_6^2+5x_7^2)+\lambda_2(x_2^2+5x_2x_7-x_3x_4+9x_3x_5+13x_5^2+4x_6^2+11x_7^2)=0
\end{multline*}
and, substituting 
$$
[x_2:x_3:\ldots:x_7]=[0:0:\lambda_0:\lambda_1:\lambda_2:0:0:0],
$$ 
we obtain
\begin{multline*} \lambda_0\lambda_1-\lambda_0\lambda_1=0,\quad  \lambda_1\lambda_2-\lambda_1\lambda_2=0,\\
\lambda_0(-\lambda_0\lambda_1-\lambda_0\lambda_2+\lambda_1^2)+\lambda_1(\lambda_0^2+\lambda_2^2-\lambda_0\lambda_1)+\lambda_2(\lambda_0^2-\lambda_1\lambda_2)=0.
\end{multline*}
\end{proof}

\subsection{Density of rational fibers}\label{exdensity}

Using the notation of Section~\ref{oneratf}, 
consider quadrics 
$$
\begin{array}{rcl}
Q_0'& := & Q_0+x_0^2+x_5^2\\
Q_1'& := & Q_1\\
Q_2'& := & Q_2+x_1^2+x_3^2
\end{array}
$$

\begin{prop}
\label{prop:surj}
Let $X'\subset \bP^7$ be the intersection 
$$
Q_0'=Q_1'=Q_2'=0.
$$
Then $X'$ is smooth and there exists a $\gamma \in  H^{2,2}(X')$ such that the period map (\ref{periodev}) is surjective.
\end{prop}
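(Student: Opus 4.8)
The plan is to verify the surjectivity of the multiplication map
\begin{equation*}
R_{(1,2)}\otimes R_{(2,2)}\to R_{(3,4)}
\end{equation*}
from the explicit description of the period map, by exhibiting a single class $\gamma\in R_{(2,2)}$ for which $\bar\nabla(\gamma)\colon R_{(1,2)}\to R_{(3,4)}$ is surjective. Since $H^{2,2}_{\mathrm{prim}}(X')\simeq R_{(2,2)}$, $H^{1,3}(X')\simeq R_{(3,4)}$, and $T_{B,b_0}\simeq R_{(1,2)}$ (the latter because deformations of the complete intersection correspond to degree-$(1,2)$ elements of the Jacobian ring), the claim reduces to a finite-dimensional linear-algebra statement over $\bC$ about the Jacobian ring $R$ of the polynomial $F=\mu_0Q_0'+\mu_1Q_1'+\mu_2Q_2'$.

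First I would set up the Jacobian ideal $I\subset \bC[\mu_0,\mu_1,\mu_2,x_0,\dots,x_7]$ generated by the eleven partials $\partial F/\partial\mu_i$ and $\partial F/\partial x_j$, and compute the graded pieces $R_{(1,2)}$, $R_{(2,2)}$, and $R_{(3,4)}$ of the bigraded quotient ring $R=\bC[\mu,x]/I$. The dimensions are dictated by the Hodge diamond: $\dim R_{(1,2)}=\dim H^{1,3}=3$, $\dim R_{(2,2)}=\dim H^{2,2}_{\mathrm{prim}}=\dim H^{2,2}-1=37$, and $\dim R_{(3,4)}=\dim H^{3,1}=3$. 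Because the target $R_{(3,4)}$ is only $3$-dimensional, surjectivity of $\bar\nabla(\gamma)$ amounts to the matrix of the bilinear multiplication pairing, restricted to $\{\gamma\}\otimes R_{(1,2)}\to R_{(3,4)}$, having rank $3$. Equivalently, one seeks a $\gamma$ such that the three images $\gamma\cdot e_1,\gamma\cdot e_2,\gamma\cdot e_3$ of a basis $e_1,e_2,e_3$ of $R_{(1,2)}$ span $R_{(3,4)}$.

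I would verify this computationally: choose a monomial basis of $R_{(3,4)}$ modulo $I$ (obtained via a Gröbner basis of $I$), pick a candidate $\gamma$ such as a single monomial of bidegree $(2,2)$, reduce each of the three products $\gamma\cdot e_i$ modulo the Gröbner basis, and check that the resulting $3\times 3$ matrix of coordinates is invertible by computing a nonzero determinant. The coefficients in $Q_0',Q_1',Q_2'$ were chosen (small integers, with the extra squares $x_0^2+x_5^2$ and $x_1^2+x_3^2$ breaking symmetry) precisely so that this determinant does not vanish; a Magma computation of the Jacobian ring and this rank suffices. The smoothness of $X'$ is checked in the same computation by confirming that the projective scheme $Q_0'=Q_1'=Q_2'=0$ has no singular points, i.e.\ that the Jacobian matrix of the three quadrics has rank $3$ everywhere on $X'$.

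The main obstacle is not conceptual but computational and hinges entirely on the genericity of the chosen coefficients: the map $\bar\nabla(\gamma)$ is a nonzero-determinant condition that holds on a Zariski-dense open set of pairs $(X,\gamma)$, so the real content is exhibiting one explicit $(X',\gamma)$ where it is verified unconditionally. The risk is that a careless choice of $\gamma$ lands in the locus where the rank drops below $3$; I would guard against this by testing several monomial candidates for $\gamma$ and, if necessary, a generic linear combination, until a nonvanishing determinant is found. Once surjectivity of $\bar\nabla(\gamma)$ is established, the hypothesis of the infinitesimal-period criterion is met, and the density statement of Theorem~\ref{theo:main}, Part~(2), follows as indicated in Section~\ref{sect:strategy}.
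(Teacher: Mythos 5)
Your proposal is correct and takes essentially the same route as the paper: the authors likewise verify smoothness by computer algebra (Magma) and use Macaulay2 to compute a monomial basis $\{\mu_0\mu_2^2x_7^4,\ \mu_1\mu_2^2x_7^4,\ \mu_2^3x_7^4\}$ of $R_{(3,4)}$, from which $\gamma=\mu_2^2x_7^2$ visibly works because its products with $\mu_0x_7^2,\mu_1x_7^2,\mu_2x_7^2\in R_{(1,2)}$ are exactly that basis. One slip in your dimension count: $R_{(1,2)}$ is identified with the tangent space $T_{B,b_0}$ and has dimension far greater than $3$ (it is $R_{(3,4)}\simeq H^{1,3}_{\mathrm{prim}}(X')$, and on the other side $R_{(1,0)}\simeq H^{3,1}_{\mathrm{prim}}(X')$, that are $3$-dimensional), but this does not damage your argument, since exhibiting any three elements of $R_{(1,2)}$ whose products with $\gamma$ span $R_{(3,4)}$ already proves surjectivity.
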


\begin{proof}
A Magma computation shows that $X'$ is smooth. 
In order to compute the period map we use  expression (\ref{periodexplicit}). We used Macaulay2 \cite{M2} to verify that the following monomials 
$$
\{\mu_0\mu_2^2x_7^4, \;\mu_1\mu_2^2x_7^4,\; \mu_2^3x_7^4 \}
$$
form a basis of the graded part $R_{(3,4)}\simeq H^{1,3}(X')$. 
In particular $\gamma=\mu_2^2x_7^2$ works.
\end{proof}

\section{Differentiating quadric bundles} \label{sect:differ}

The goal of this section is to show that the quadric bundles
arising from complete intersection of three quadrics in $\bP^7$ do in fact
differ from the $(2,2)$ hypersurfaces in $\bP^2 \times \bP^3$ considered
in \cite{HPT}. Note however that both families specialize to the {\em same}
reference variety (\ref{eqn:Y}).

Let $\pi:Q \ra \bP^2$ be a quadric surface bundle with smooth
degeneracy curve $D\subset \bP^2$, i.e., $Q$ is a smooth complex projective
fourfold, $\pi$ is a flat morphism with smooth ($\simeq \bF_0$) fibers over
$\bP^2 \setminus D$, and quadric cones ($\simeq \bP(1,1,2)$)
as fibers over $D$. Let $\tau:S\ra \bP^2$ denote the associated double cover,
simply branched along $D$. We may interpret $S$ as the Stein factorization
of the relative variety of lines
$$F_1(Q/\bP^2) \ra S \ra \bP^2;$$
as such, $S$ is equipped with a natural conic bundle structure and
thus a class $\alpha_Q \in H^2(S,\mu_2)$.
We refer the reader to \cite{APS} for
a close analysis of the equivalence between quadric surface bundles and 
Azumaya algebras over double covers. 

We present a cohomological interpretation of this correspondence due to Laszlo
\cite{Las}. 
Let $H^2_0(S,\bZ)$ denote the primitive cohomology of $S$, i.e., the 
kernel of $\tau_*$. It carries the structure of a lattice with respect
to the intersection form, as well as a weight two Hodge structure.
Choose an embedding
$$\begin{array}{ccc}
Q & \hookrightarrow & \bP(E) \\
  & \stackrel{\pi}{\searrow} & \downarrow \\
  &			    & \bP^2
\end{array}
$$
where $E\ra \bP^2$ is a rank four vector bundle. Let 
$H^4_0(Q,\bZ)$ denote kernel of the push forward homomorphism
$$H^4(Q,\bZ) \ra H^6(\bP(E),\bZ).$$ 
This carries the structure of a lattice and a weight four Hodge structure.
Let $H^4_0(Q,\bZ)(1)$ denote its Tate twist, a weight two Hodge structure;
this reverses the sign of the integral quadratic form.

\begin{theo} \cite[Th.~II.3.1]{Las} \label{theo:Las}
There exists an embedding of abelian groups
$$\Phi:H^4_0(Q,\bZ)(1) \hookrightarrow H^2_0(S,\bZ)$$
compatible with the lattice and Hodge structures. 
The image has index two and is characterized as follows:
$$\operatorname{image}(\Phi)=\Lambda_Q:=\{\gamma \in H^2_0(S,\bZ): 
\left(\gamma\,\mathrm{mod}\, 2,\alpha_Q\right)\equiv 0\, \mathrm{mod}\, 2 \}.$$
\end{theo}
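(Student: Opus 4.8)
I need to prove Theorem (Laszlo), which asserts an index-two embedding of Hodge structures $\Phi: H^4_0(Q,\bZ)(1) \hookrightarrow H^2_0(S,\bZ)$ with image characterized by the parity condition against the Brauer class $\alpha_Q \in H^2(S,\mu_2)$.

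=== PROOF PROPOSAL ===

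\begin{proof}[Proof sketch]
The plan is to construct $\Phi$ geometrically from the relative variety of lines, verify it is a lattice embedding of Hodge structures by a Leray/Lefschetz computation, and finally identify the image via a parity (mod $2$) argument involving the conic bundle class $\alpha_Q$. First I would work over the open locus $U=\bP^2\setminus D$, where each fiber $Q_u$ is a smooth quadric surface $\simeq \bF_0$ carrying two rulings; the double cover $S\to\bP^2$ parametrizes these rulings, and the variety of lines $F_1(Q/\bP^2)\to S$ is a $\bP^1$-bundle over $U_S:=\tau^{-1}(U)$. Over a point $s\in U_S$ lying above $u$, the corresponding ruling sweeps out a family of lines on $Q_u$; taking the cohomology class of the total space of such a family, and pushing forward, produces a correspondence between $S$ and $Q$. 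The abelian-group homomorphism $\Phi$ is defined as the induced map on primitive cohomology, sending a primitive weight-four class on $Q$ to its image under this incidence correspondence (composed with the Tate twist to land in weight two). The compatibility with Hodge structures is automatic once $\Phi$ is realized by an algebraic correspondence, since algebraic correspondences are morphisms of Hodge structures of the appropriate bidegree.

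The second step is to check that $\Phi$ is compatible with the intersection forms up to the stated sign, i.e. that it is an isometry onto its image after reversing sign. The cleanest route is to compute both lattices explicitly using the quadric-bundle structure. For $H^4_0(Q,\bZ)$ I would use the Leray spectral sequence for $\pi:Q\to\bP^2$ together with the standard description of the cohomology of a smooth quadric surface (the fiber class and the two rulings), which shows that the primitive part $H^4_0(Q,\bZ)$ is, up to the ambient $\bP(E)$ contributions, built from the local system $R^2\pi_*\bZ$ of vanishing ruling-differences over $U$. On the other side, $H^2_0(S,\bZ)=\ker\tau_*$ is the $(-1)$-eigenspace of the deck involution $\iota$ of $S\to\bP^2$, and the ruling local system on $Q$ is exactly pulled back from this eigenspace. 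Matching intersection numbers fiberwise — a line on a quadric surface meets a line from the other ruling in one point and a line from the same ruling in zero points — produces the sign reversal between the quadratic form on $H^4_0(Q,\bZ)(1)$ and the intersection form on $H^2_0(S,\bZ)$. This identifies $\Phi$ with a lattice isometry onto a sublattice.

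The main obstacle, and the genuinely delicate part, is pinning down the \emph{image} as precisely the index-two sublattice $\Lambda_Q$ cut out by the condition $(\gamma \bmod 2, \alpha_Q)\equiv 0 \bmod 2$. Here the geometry over the discriminant $D$ intervenes: over $D$ the two rulings collide (the fiber degenerates to the cone $\bP(1,1,2)$, whose unique ruling is the vanishing cycle), so $F_1(Q/\bP^2)\to S$ is only a conic bundle rather than a $\bP^1$-bundle, and its failure to admit a global section is measured exactly by $\alpha_Q\in H^2(S,\mu_2)$. My plan is to analyze the monodromy of the ruling local system around $D$ and show that the image of $\Phi$ consists of those primitive classes on $S$ whose mod-$2$ reduction extends compatibly across the ramification — equivalently, those pairing trivially mod $2$ with the obstruction class $\alpha_Q$. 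Concretely I would (i) show the cokernel of $\Phi$ is $\bZ/2$ by comparing discriminants of the two lattices, using the intersection-form computation above, and (ii) identify the nonzero element of $H^2_0(S,\bZ)/\operatorname{image}(\Phi)\otimes\bZ/2$ with the Brauer/conic-bundle obstruction via the Kummer/conic-bundle exact sequence that defines $\alpha_Q$. Step (ii) is where all the real content sits, since it requires tracking the mod-$2$ data through the double cover and matching it to the cohomological class $\alpha_Q$; I would lean on the analysis of the equivalence between quadric surface bundles and Azumaya algebras over the double cover (as in the cited work on this correspondence) to make this matching precise.
\end{proof}
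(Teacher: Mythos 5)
First, a point of comparison: the paper does not prove this statement at all --- it is quoted verbatim from Laszlo \cite[Th.~II.3.1]{Las} as a cited result --- so there is no internal proof to match against. Your sketch does follow, in outline, the route of the actual construction: the correspondence induced by the relative variety of lines $F_1(Q/\bP^2)$, viewed as a conic bundle over $S$, is indeed the source of both $\Phi$ and the class $\alpha_Q$, and the fiberwise ruling computation is the right mechanism for the sign reversal under the Tate twist. So the strategy is sound.

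As a proof, however, the proposal has genuine gaps at exactly the two quantitative claims of the theorem. For step (i), ``the cokernel of $\Phi$ is $\bZ/2$ by comparing discriminants'' presupposes that $\Phi$ is injective with finite cokernel --- i.e., that the two primitive lattices have equal rank --- and that both discriminants are actually computable; neither is carried out. The rank count requires the Leray argument to be pushed across the degeneration locus $D$ (the ruling local system $R^2\pi_*\bZ$ does not extend as a local system over $D$, so one must control its direct image and the contribution of the cone fibers), and the discriminant of $H^4_0(Q,\bZ)$ must be extracted from the unimodularity of $H^4(Q,\bZ)$ together with the sublattice of classes coming from $\bP(E)$; this is the substance behind ``index two,'' not a byproduct of the fiberwise intersection matching. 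For step (ii), which you rightly call the real content, index two alone does not determine an index-two sublattice: what is needed is the inclusion $\operatorname{image}(\Phi)\subseteq\Lambda_Q$, which should come from a projection-formula argument exploiting that $\alpha_Q$ pulls back trivially to the total space of $F_1(Q/\bP^2)\to S$ (classes in the image are pushed down from there, hence pair evenly with $\alpha_Q$ mod $2$), combined with the nondegeneracy claim $\Lambda_Q\neq H^2_0(S,\bZ)$, i.e., that $\alpha_Q$ does not lie in the span of the hyperplane class mod $2$ --- otherwise $\Lambda_Q$ would be the whole primitive lattice and the characterization would be vacuous. Your formulation ``identify the nonzero element of the cokernel with the Brauer obstruction'' conflates a class in $H^2_0(S,\bZ)/\operatorname{image}(\Phi)$ with a class in $H^2(S,\mu_2)$; the correct statement is that the functional $\gamma\mapsto\left(\gamma\,\mathrm{mod}\,2,\alpha_Q\right)$ cuts out the image. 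Deferring this step to the cited equivalence with Azumaya algebras names the needed input but does not execute it, so the proposal remains a plan rather than a proof at its decisive point.
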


Now suppose we have a birational equivalence
$$\begin{array}{ccccc}
  Q_1 & & \stackrel{\sim}{\dashrightarrow } & & Q_2 \\
      & \searrow &   & \swarrow & \\
      &          & \bP^2 &      & 
\end{array}
$$
of quadric bundles over $\bP^2$. It is clear that $Q_1$
and $Q_2$ must have the same degeneracy curve $D\subset \bP^2$
and induced double cover $\tau:S\ra \bP^2$.
Consider the classes $\alpha_{Q_1},\alpha_{Q_2} \in \Br(S)[2]$,
obtained via the canonical surjection $H^2(S,\mu_2) \ra \Br(S)[2]$.
Since $\alpha_{Q_i}$ generates the kernel
of 
$$H^2(\bC(S),\mu_2) \ra H^2(\bC(Q_i),\mu_2)$$
by \cite[p.469]{arason}, we have $\alpha_{Q_1}=\alpha_{Q_2}$.

\begin{prop}
Let $D\subset \bP^2$ be a very general octic plane curve,
$Q_1,Q_2 \ra \bP^2$ quadric surface bundles with degeneracy curve
$D$, where $Q_1\subset \bP^2 \times \bP^3$ is a $(2,2)$
hypersurface and $Q_2 \subset \bP^2 \times \bP^5$ is a 
complete intersection of
hypersurfaces of bidegrees $(1,1)$, $(1,1)$, $(1,2)$. Then $Q_1$ 
and $Q_2$ are not birational over $\bP^2$.
\end{prop}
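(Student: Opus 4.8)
The plan is to argue by contradiction through the lattice correspondence of Laszlo (Theorem~\ref{theo:Las}). Suppose that for a very general octic $D$ there are bundles $Q_1,Q_2$ as in the statement, with common degeneracy curve $D$, that are birational over $\bP^2$. As recalled just above, such a birational equivalence identifies the generic fibres as quadric surfaces over $K=\bC(\bP^2)$, so the two bundles share the double cover $\tau\colon S\to\bP^2$, and by the cited result of Arason it forces $\alpha_{Q_1}=\alpha_{Q_2}$ in $\Br(S)[2]$. For very general $D$ one has $\NS(S)=\bZ\cdot\tau^*\cO(1)$ with $(\tau^*\cO(1))^2=2$, and $H^2_0(S,\bZ)=\ker\tau_*=(\tau^*\cO(1))^{\perp}$; the Kummer sequence then reads $\Br(S)[2]=H^2(S,\mu_2)/\langle\tau^*\cO(1)\rangle$. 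Since $\tau^*\cO(1)$ is orthogonal to $H^2_0(S,\bZ)$, the sublattice $\Lambda_Q=\{\gamma\in H^2_0(S,\bZ):(\gamma\bmod 2,\alpha_Q)\equiv 0\}$ is unchanged if the $H^2(S,\mu_2)$-lift of $\alpha_Q$ is altered by $\tau^*\cO(1)$; hence $\Lambda_Q$ depends only on the Brauer class, and $\alpha_{Q_1}=\alpha_{Q_2}$ yields $\Lambda_{Q_1}=\Lambda_{Q_2}$. By Theorem~\ref{theo:Las} this realises $H^4_0(Q_1,\bZ)(1)$ and $H^4_0(Q_2,\bZ)(1)$ as one and the same index-two sublattice of $H^2_0(S,\bZ)$.

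To reach a contradiction I would isolate a discrete invariant of the Brauer class separating the two constructions. Because $\NS(S)$ is even, the self-cup-product $q(\alpha):=\alpha\cup\alpha\in H^4(S,\bZ/2)=\bZ/2$ is independent of the chosen lift of $\alpha\in\Br(S)[2]$ to $H^2(S,\mu_2)$ (replacing $\alpha$ by $\alpha+\tau^*\cO(1)$ changes $q$ by $(\tau^*\cO(1))^2\equiv 0$), so $q$ descends to a well-defined function on $\Br(S)[2]$. Via Theorem~\ref{theo:Las} the value $q(\alpha_{Q_i})$ is read off from the discriminant quadratic form of $\Lambda_{Q_i}\cong H^4_0(Q_i,\bZ)$, hence is a \emph{deformation invariant} of the fourfold $Q_i$ and in particular independent of the (very general) $D$. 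The plan is to prove $q(\alpha_{Q_1})\neq q(\alpha_{Q_2})$, which is already incompatible with $\Lambda_{Q_1}=\Lambda_{Q_2}$; the very-general hypothesis enters only through $\NS(S)=\bZ\cdot\tau^*\cO(1)$, which both makes $q$ well defined on $\Br(S)[2]$ and guarantees $\alpha_{Q_1}=\alpha_{Q_2}$ is equivalent to $\Lambda_{Q_1}=\Lambda_{Q_2}$.

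The evaluation of $q(\alpha_{Q_i})$ I would carry out through the even Clifford algebra, following \cite{APS} and \cite{Las}. Writing $Q_i\subset\bP(E_i)$ with fibrewise form $q_i\in H^0(\bP^2,\Sym^2 E_i^{\vee}\otimes L_i)$, the $(2,2)$ hypersurface has $(E_1,L_1)=(\cO^{\oplus 4},\cO(2))$, while eliminating the two $(1,1)$ equations presents the three-quadrics bundle with $E_2=\ker(\cO^{\oplus 6}\to\cO(1)^{\oplus 2})$, so that $c_1(E_2)=-2H$ and $L_2=\cO(1)$; in both cases $\det q_i\in H^0(\cO(8))$, matching the octic $D$. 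The class $\alpha_{Q_i}$ is that of the Azumaya algebra $\mathcal C_0(q_i)$ on $S$, and $q(\alpha_{Q_i})$ is to be computed from its Stiefel--Whitney/Chern data. The visible numerical difference between the two presentations is the parity $c_1(L_i)\bmod 2$ (equal to $0$ for $Q_1$ and $1$ for $Q_2$) together with $c_1(E_2)\neq 0$, and the point is to show that this difference is recorded by $q$.

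The main obstacle is precisely this last step. The difficulty is as much conceptual as computational: $\alpha_{Q_i}$ is intrinsic to the generic fibre and a priori forgets the integral data $(E_i,L_i)$ carrying the visible asymmetry, since a mere change of presentation $(E,L)\mapsto(E\otimes M,L\otimes M^{\otimes 2})$ shifts the $H^2(S,\mu_2)$-lift by a class in $\NS(S)/2$ and leaves $q$ unchanged. One must therefore verify that the parity distinguishing $Q_1$ from $Q_2$ is \emph{not} of this form, i.e.\ that it genuinely survives into the self-intersection of the even Clifford class on $S$. Should a clean closed formula prove elusive, I would instead establish $q(\alpha_{Q_1})\neq q(\alpha_{Q_2})$ on a single explicit smooth octic by a direct even-Clifford (Magma) computation, and then invoke the deformation-invariance of $q$ to conclude for very general $D$.
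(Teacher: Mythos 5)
Your reduction is sound and is in fact identical to the paper's: birationality over $\bP^2$ forces a common double cover $S$ and, via Arason, $\alpha_{Q_1}=\alpha_{Q_2}\in\Br(S)[2]$; the very general hypothesis gives $\Pic(S)=\bZ h$ so that $\Br(S)[2]=H^2(S,\mu_2)/\langle h\rangle$, and since $h$ pairs trivially with $H^2_0(S,\bZ)$ the sublattice $\Lambda_Q$ depends only on the Brauer class; Laszlo's theorem then identifies $\Lambda_{Q_i}\simeq H^4_0(Q_i,\bZ)(1)$. But the decisive step --- actually exhibiting an invariant that takes different values on the two families --- is missing from your proposal, and you say so yourself (``the main obstacle is precisely this last step''). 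Deferring it to an unspecified even-Clifford Magma computation does not close the gap: it is not routine to extract your class $q(\alpha)\in\bZ/2$ from an even Clifford algebra over $S$, and your claim that $q$ is ``read off from the discriminant quadratic form of $\Lambda_{Q_i}$'' (needed for the deformation-invariance fallback) is itself asserted without proof.

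There is also a real risk that your invariant is too coarse even if computed. By Wu's formula on the surface $S$ one has $\alpha\cup\alpha=(w_2(S),\alpha)=(K_S,\alpha)=(h,\alpha)\bmod 2$, since $K_S=\tau^*\cO_{\bP^2}(1)$; so $q$ is a single parity, and nothing in your setup guarantees it differs between the two families. The paper avoids Clifford data entirely and uses a strictly finer invariant that it can compute by hand: the Gram matrices of $h_1^2,h_1h_2,h_2^2$ on the $(2,2)$ hypersurface and of $g_1^2,g_1g_2,g_2^2$ on the $(1,1),(1,1),(1,2)$ complete intersection have reductions mod $2$ of ranks $0$ and $2$ respectively (discriminant groups $(\bZ/2)^3$ versus $\bZ/8$), hence are $2$-adically inequivalent; since a nondegenerate lattice and its orthogonal complement in a unimodular lattice have the same discriminant group up to sign, $H^4_0(Q_1,\bZ)\not\simeq H^4_0(Q_2,\bZ)$, contradicting $\Lambda_{Q_1}\simeq\Lambda_{Q_2}$. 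If you want to salvage your plan, you should either prove $q(\alpha_{Q_1})\neq q(\alpha_{Q_2})$ directly --- which amounts to a computation at least as hard as the paper's, with no guarantee of a nonzero answer --- or replace $q$ by the full discriminant group of $\Lambda_Q$, at which point you have reproduced the paper's argument.
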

The precise condition we require is that $\operatorname{Pic}(S)\simeq \bZ$.
\begin{proof}
For the first example, 
let $h_1$ and $h_2$  
denote the pull-backs of the hyperplane classes from each factor. Then we 
have $[Q_1]=2h_1+2h_2$ and
$$
\begin{array}{c|ccc}
    & 	h_1^2 &	h_1h_2  &	h_2^2 \\
\hline
h_1^2 &	0 & 	0 &	2 \\
h_1h_2 & 0 &	2 &	2 \\
h_2^2 &	2 &	2 & 0  
\end{array}
$$
For the second example, let $g_1$ and
$g_2$ denote the 
hyperplace classes as above so that 
$$[Q_2] = 4g_1^2g_2 + 5g_1g_2^2 + 2g_2^3.$$
Then we have
$$
\begin{array}{c|ccc}
 & g_1^2 &	g_1g_2 & g_2^2 \\
\hline
g_1^2& 	0 &	0 &	2 \\
g_1g_2 & 0 &	2 &	5 \\
g_2^2 &	2 &	5 &	4
\end{array}
$$
These two lattices are inequivalent over the 2-adics. Indeed, their ranks 
modulo two differ. It follows that the lattices $H^4_0(Q_1,\bZ)$ and
$H^4_0(Q_2,\bZ)$ are also inequivalent, as a nondegenerate lattice
and its orthogonal complement in a unimodular lattice have the 
same discriminant groups up to sign. (The discriminant groups are 
a way of packaging the $p$-adic invariants of a lattice.)

Under our assumption, $\Br(S)[2]=H^2(S,\mu_2)/\left<h\right>$ where
$h$ is the hyperplane class pulled back from $\bP^2$. 
If $Q_1$ and $Q_2$ were birational over $\bP^2$ then 
$$\alpha_{Q_1}=\alpha_{Q_2} \in H^2(S,\mu_2)/\left<h\right>,$$
whence $\Lambda_{Q_1}\simeq \Lambda_{Q_2}$.
This would contradict Theorem~\ref{theo:Las}.
\end{proof}

\begin{rema}
Observe that the common reference variety (\ref{eqn:Y}) admits 
nontrivial $2$-torsion in its unramified cohomology. 
It is intriguing that we
differentiate the smooth members through a $2$-adic computation
of lattices.
\end{rema}

\bibliographystyle{alpha}
\bibliography{3quadrics}
\end{document}